 \numberwithin{equation}{section}
\theoremstyle{plain}
\newtheorem{thm}{Theorem}[section]
\newtheorem{cor}[thm]{Corollary}
\newtheorem{lem}[thm]{Lemma}
\newtheorem{prop}[thm]{Proposition}
\theoremstyle{definition}
\newtheorem{defn}[thm]{Definition}
\newtheorem{ex}[thm]{Example}
\theoremstyle{remark}
\newtheorem{rem}[thm]{Remark}
\newcommand{\N}{\mathbb{N}}
\newcommand{\R}{\mathbb{R}}
\newcommand{\id}{\text{id}_{\R^d}}
\newcommand{\cG}{{\cal G}}
\newcommand{\cU}{{\cal U}}
\newcommand{\cV}{{\cal V}}
\newcommand{\cMp}{{\cal M}_p}
\newcommand{\cPp}{{\cal P}_p(\R^d)}
\newcommand{\bp}{\begin{proof}[\ensuremath{\mathbf{Proof}}]}
\newcommand{\bs}{\begin{proof}[\ensuremath{\mathbf{Solution}}]}
\newcommand{\ep}{\end{proof}}
\begin{document}


\title{Value functions in the Wasserstein spaces:\\
finite time horizons}

\author{Ryan Hynd\footnote{Department of Mathematics, University of Pennsylvania, 209 South 33rd St. Philadelphia PA 19104, rhynd@math.upenn.edu.  Partially supported by NSF grants DMS-1004733 and DMS-1301628.}\;
and Hwa Kil Kim\footnote{Department of Mathematical Sciences,
Seoul National University.  Partially supported by BK21 PLUS SNU Mathematical Sciences Division.}}


\maketitle

\begin{abstract}
We study analogs of value functions arising in classical mechanics in the space of probability measures endowed with the Wasserstein metric $W_p$, for $1<p<\infty$.  Our main result is that each of these generalized value functions is a type of viscosity solution of an appropriate Hamilton-Jacobi equation, completing a program initiated by Gangbo, Nguyen, and Tudorascu. Of particular interest is a formula we derive for a generalized value function when the associated potential energy is of the form ${\cal V}(\mu)=\int_{\R^d}V(x)d\mu(x)$. This formula allows us to make rigorous a well known heuristic connection between Euler-Poisson equations and classical Hamilton-Jacobi equations. Further results are presented which suggest there is a rich theory to be developed of deterministic control in the Wasserstein spaces.
\end{abstract}


\section{Introduction}
In this paper, we study generalizations of value functions of the form
\begin{equation}\label{ClassAction}
u(x,t)=\inf\left\{g(\gamma(0)) +\int^t_0\left(\frac{1}{p}|\dot{\gamma}(s)|^p -V(\gamma(s))\right)ds: \gamma\in AC_p\left([0,t]; \R^d\right), \; \gamma(t)=x\right\}
\end{equation}
in the space of probability measures.  In formula \eqref{ClassAction}, $(x,t)\in \R^d\times[0,\infty)$, $p\in (1,\infty)$, $g, V\in C(\R^d)$, and
$AC_p\left([0,t]; \R^d\right)$ consists of absolutely continuous paths $\gamma: [0,t]\rightarrow\R^d$ such that $\dot\gamma\in L^p([0,t];\R^d)$. Recall that
when $p=2$, $u(x,t)$ is the value of an action integral, evaluated along an optimal path, that arises in classical mechanics; in this case, the function $V$ has
a natural interpretation as potential energy.   Consequently, any function
$u$ as defined above will be called a {\it classical value function}.

\par An important fact about classical value functions is that they can be characterized as viscosity solutions of the classical Hamilton-Jacobi equation (HJE)
\begin{equation}\label{classicalHJE}
\partial_t u +\frac{1}{q}|\nabla u|^q +V(x)=0, \quad (x,t)\in\R^d\times(0,T),
\end{equation}
subject to the initial condition
$$
u(x,0)=g(x), \quad x\in \R^d.
$$
Standard references for this topic include \cite{Bardi, CIL, FS}. Here and throughout this paper, $q\in (1,\infty)$ is the conjugate H\"{o}lder exponent to $p$, $1/p+1/q=1$.

\par Another application of classical value functions is in designing
action minimizing trajectories. A necessary condition on any minimizing path $\gamma$ for $u(x,t)$ is that it satisfies the Euler-Lagrange equations
\begin{equation}\label{ClassEulLag}
\frac{d}{ds}\left(|\dot\gamma(s)|^{p-2}\dot\gamma(s)\right)=-\nabla V(\gamma(s)), \quad s\in (0,t).
\end{equation}
It turns out that optimality also necessitates
\begin{equation}\label{ClassOptEq}
|\dot\gamma(s)|^{p-2}\dot\gamma(s)=\nabla u(\gamma(s),s), \quad s\in (0,t)
\end{equation}
provided that $u$ is differentiable at each $(\gamma(s),s)$.

\par In this work, we will establish analogs of these properties for {\it generalized value functions}, which are functionals of the form
\begin{equation}\label{GenAction}
{\cal U}(\mu, t)=\inf\left\{{\cal G}(\sigma(0))+\int^t_0\left(\frac{1}{p}||\dot{\sigma}(s)||^p -{\cal V}(\sigma(s))\right)ds: \sigma\in AC_p([0,t], \cMp),\; \sigma(t)=\mu\right\}.
\end{equation}
Here $t\ge 0$ and $\cMp$ is the $p$-Wasserstein space. That is, $\cMp$ is the space ${\cal P}_p(\R^d)$ of Borel probability measures $\mu$ on $\R^d$ with finite $p$-th moments
$$
\int_{\R^d}|x|^pd\mu(x)<\infty
$$
endowed with the $p$-Wasserstein metric
$$
W_p(\mu,\nu):=\inf\left\{ \left(\iint_{\R^d\times\R^d}|x-y|^pd\pi(x,y)\right)^{1/p}: \pi\in \Gamma(\mu,\nu)\right\}.
$$
As usual, $\Gamma(\mu,\nu)$ is the subcollection of probability measures of ${\cal P}_p(\R^d\times\R^d)$ having first marginal $\mu$ and second marginal $\nu$.
We refer the reader to the volumes \cite{AGS,V} for more on the spaces $\cMp$.

\par In the expression \eqref{GenAction}, $\cG, \cV \in C(\cMp)$, and the space $AC_p([0,t], \cMp)$ consists of paths
$\sigma: [0,t]\rightarrow \cMp$ for which there is $h\in L^p([0,t])$ satisfying
$$
W_p(\sigma(s_1),\sigma(s_2))\le \int^{s_2}_{s_1}h(u)du
$$
for $0\le s_1\le s_2\le t$. The smallest such $h$ is denoted $||\dot\sigma||$ is called the {\it metric derivative} of $\sigma$.  Any Borel measurable map $v: \R^d\times [0,t]\rightarrow \R^d$
for which the continuity equation
\begin{equation}\label{ContEq}
\partial_s\sigma +\nabla\cdot (\sigma v)=0, \quad \R^d\times (0,t)
\end{equation}
holds in the sense of distributions,  and
\begin{equation}\label{VinLp}
\int^t_0 \int_{\R^d}|v(x,s)|^pd\sigma_s(x) ds<\infty,
\end{equation}
is known as a {\it velocity} for $\sigma$.  It turns out that there is always one velocity for $\sigma$ satisfying
\begin{equation}\label{MetricDer}
||v(s)||_{L^p(\sigma(s))}=||\dot\sigma(s)||
\end{equation}
for Lebesgue almost every $s\in [0,t]$ and we call it a {\it minimal velocity} (Theorem 8.3.1 of \cite{AGS}).

\par A standing assumption we will make throughout this paper is that the initial condition $\cG$ is Lipschitz continuous
\begin{equation}\label{GLip}
L:=\text{Lip}(\cG)<\infty.
\end{equation}
We will also assume throughout that there are $\alpha, \beta \in \R$, and $\varrho\in \cMp$ such that the potential $\cV$ satisfies the inequality
\begin{equation}\label{Vgrowth}
{\cal V}(\mu)\le \alpha W_p(\mu, \varrho)^p +\beta, \quad \mu\in \cMp.
\end{equation}
These are analogous to the assumptions typically made on $g$ and $V$ in \eqref{ClassAction} to ensure that a classical value function $u$ is finite valued. Under these conditions, our main result is as follows.
\begin{thm}\label{thm1} There is a positive number $T=T(\alpha,p)$ for which:\\
(i) $\cU\in C(\cMp\times (0,T))$; \\
(ii) $\lim_{t\rightarrow 0^+}\cU(\mu,t)={\cal G}(\mu)$; \\
(iii) $\cU$ is a viscosity solution (see definition \eqref{ViscDefn}) of the HJE
\begin{equation}\label{WassHJE}
\partial_t\cU +\frac{1}{q}||\nabla_\mu \cU||^q_{L^q(\mu)}+{\cal V}(\mu)=0
\end{equation}
on $\cMp \times (0, T)$.
\end{thm}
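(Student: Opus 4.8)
The plan is to mirror, at the level of metric-space analysis on $\cMp$, the classical program that identifies value functions with viscosity solutions of Hamilton–Jacobi equations, using the dynamic-programming structure of $\cU$ as the engine. First I would establish the a priori bounds that make $\cU$ well-defined and locally bounded: using \eqref{Vgrowth} together with the elementary inequality $W_p(\sigma(s),\varrho)^p \le C_p(W_p(\mu,\varrho)^p + (t\cdot\|\dot\sigma\|_\infty)^p$-type estimates along the path, one controls $\int_0^t \cV(\sigma(s))\,ds$ from above by a multiple of the kinetic term plus lower-order data; choosing $T=T(\alpha,p)$ small enough that the coefficient of $\tfrac1p\|\dot\sigma\|^p$ this produces is strictly less than $1$ forces $\cU(\mu,t) > -\infty$ on $\cMp\times(0,T)$, and the Lipschitz bound \eqref{GLip} together with the constant-path competitor gives finite upper bounds. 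These same manipulations yield that near-optimal paths have uniformly (in a bounded region) bounded $L^p$ velocity, which is what one needs for compactness.

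Next I would prove the Dynamic Programming Principle (DPP): for $0 < r < t < T$,
\[
\cU(\mu,t) = \inf\left\{ \cU(\sigma(r),r) + \int_r^t\left(\tfrac1p\|\dot\sigma(s)\|^p - \cV(\sigma(s))\right)ds : \sigma\in AC_p([r,t],\cMp),\ \sigma(t)=\mu\right\}.
\]
The "$\le$" direction is immediate by concatenating paths; the "$\ge$" direction uses that any path on $[0,t]$ restricts to a path on $[r,t]$ whose initial segment is a competitor for $\cU(\sigma(r),r)$. From the DPP I would extract (i) and (ii). For continuity in $t$: the DPP plus the velocity bounds above give a local Lipschitz-type modulus in the time variable. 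For continuity in $\mu$: given $\mu,\nu$ with $W_p(\mu,\nu)$ small, interpolate by a short constant-speed geodesic segment attached to a near-optimal path for one of them; the cost of this modification is controlled by $W_p(\mu,\nu)$ (for the geodesic cost), by $\text{Lip}(\cG)$ and the continuity of $\cV$ on the bounded set where the paths live. Claim (ii) follows by taking the constant path as an upper-bound competitor and using the lower bound from \eqref{Vgrowth}–\eqref{GLip} to see the infimum cannot drop far below $\cG(\mu)$ as $t\to 0^+$; continuity of $\cG$ closes it.

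Finally, for (iii) — the viscosity solution property in the sense of the paper's definition \eqref{ViscDefn} — I would argue the sub- and supersolution inequalities separately at a point $(\mu_0,t_0)$ where $\cU-\varphi$ has a local max (resp. min) for an admissible test functional $\varphi$. For the supersolution inequality, freeze a near-optimal velocity field $v$ at $(\mu_0,t_0)$, push $\mu_0$ along the flow of $v$ for a short time to get a competitor path, plug into the DPP, subtract $\varphi$, divide by the time increment and let it go to zero; the chain rule for $\varphi$ along absolutely continuous curves in $\cMp$ (which identifies $\tfrac{d}{ds}\varphi(\sigma(s)) = \partial_t\varphi + \langle \nabla_\mu\varphi, v\rangle_{L^2(\sigma)}$-type term) converts the limit into $\partial_t\varphi + \tfrac1p\|v\|^p_{L^p} + \langle\nabla_\mu\varphi,v\rangle - \cV \ge 0$, and optimizing over $v$ by the Legendre duality $\inf_v\{\tfrac1p\|v\|^p + \langle w,v\rangle\} = -\tfrac1q\|w\|^q$ gives exactly \eqref{WassHJE} with "$\ge$". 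The subsolution inequality is the reverse and is the more delicate half: here one must use that an optimal (or $\epsilon$-optimal) path exists on a short interval ending at $(\mu_0,t_0)$ and that its initial velocity can be taken to nearly realize the metric derivative \eqref{MetricDer}; running the DPP backward along this path and using the test-function inequality at the max/min point yields the matching "$\le$". I expect the main obstacle to be exactly this subsolution step: controlling the behavior of near-optimal paths on vanishingly short time intervals — ensuring their initial velocities are suitably bounded and can be compared to $\nabla_\mu\varphi(\mu_0)$ — requires the compactness/velocity estimates from the first step to be quantitative enough, and care is needed because $\cMp$ is not locally compact, so one works with the $L^p$ bounds on velocities and weak-$*$ convergence of the associated momenta rather than with path compactness directly.
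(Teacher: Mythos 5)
Your overall architecture---finiteness via a Poincar\'{e}-type bound, the dynamic programming principle, continuity from the DPP, and then the viscosity inequalities from DPP plus short-time competitors---matches the paper's. But there is a significant error in the organization of the viscosity step, and a key technical idea is missing.

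First, you have the sub- and supersolution roles reversed. In the paper, the \emph{subsolution} inequality \eqref{SubSolnIneq2} is the ``easy'' direction: one fixes $\xi$, $a$ satisfying \eqref{SubSolnIneq}, picks an explicit competitor path $\sigma(s) = (\id + (t_0-s)v)_\#\mu_0$ with $v = \lambda(r-\id)$ a tangent vector (a short constant-speed geodesic), plugs it into one side of the DPP, cancels $\cU(\mu_0,t_0)$, divides by $h$, lets $h\to 0^+$, and finally takes the supremum over $v\in\text{Tan}_{\mu_0}\cMp$ via Legendre duality. Conversely, the \emph{supersolution} inequality \eqref{SupSolnIneq2} is the delicate half: it requires taking $\epsilon$-optimal paths $\sigma^h$ ending at $\mu_0$ at time $t_0$ and running the DPP against them. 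You describe exactly these two constructions but attribute each to the wrong inequality, and you also frame the definition in terms of a test functional $\varphi$ with $\cU-\varphi$ having a local extremum, whereas Definition \ref{ViscDefn} works directly with a scalar $a$ and a cotangent vector $\xi$ satisfying a first-order expansion with $o(|t-t_0|) + o(W_p(\mu,\mu_0))$ remainder.

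Second, and more substantively, the heart of the supersolution argument is missing from your proposal. The a priori estimate on near-optimal paths only gives $W_p(\sigma(t_0-h),\mu_0) \le C h^{1-1/p}$ (the bound \eqref{BadEstW}), which is too weak: the remainder in \eqref{SupSolnIneq} is $o(W_p(\sigma(t_0-h),\mu_0))$, and dividing by $h$ the quantity $o(W_p(\sigma(t_0-h),\mu_0))/h$ need not vanish since $h^{1-1/p}/h \to \infty$. The paper's key move is to bootstrap: combine the supersolution inequality itself with Young's inequality and the Poincar\'{e} bound to absorb the kinetic term, which upgrades the estimate to $W_p(\sigma(t_0-h),\mu_0)\le Ch$ (the inequality \eqref{UnifSmallh}). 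Only then does the remainder become $o(h)$ and the limit $h\to 0^+$ go through. Your proposal correctly flags ``controlling the behavior of near-optimal paths on vanishingly short time intervals'' as the obstacle, but does not supply the bootstrap that resolves it, and attributes the difficulty to the subsolution half where no such issue arises (there the competitor path is explicit and its Wasserstein displacement is exactly $h\,\|v\|_{L^p(\mu_0)}$).
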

\par The motivation for this study originates in a paper of Gangbo, Nyugen, and Tudorasco \cite{G}.  These authors introduced the value function above and defined
a very natural notion of viscosity solution that is essentially the one we give in Definition \ref{ViscDefn}.
Their central result is that if $\cV$ satisfies \eqref{Vgrowth}, the value function is in general a viscosity subsolution and that if the dimension of the underlying Euclidean
space is one, then the value function is indeed a viscosity solution. They also derived the Euler-Poisson equations as a necessary condition for action minimizing trajectories.
The paper in question \cite{G} built on their previous work where they also considered questions related to fluid mechanics and action minimizing paths \cite{G2}. We improve the results of \cite{G} first by showing that generalized value functions are indeed viscosity solutions. We also show that for certain potentials $\cV$ and initial conditions $\cG$, there is a natural connection between the Euler-Poisson equations and the classical HJE equation. Moreover, in this case there is a simple, 
near explicit formula for the action. Finally, our results apply for any $p\in (1,\infty)$ while the work of \cite{G} only considered $p=2$.

\par There has been a recent plethora of research done on Hamilton-Jacobi equations in the space of measures and more generally in metric spaces; 
see for example \cite{FK}, \cite{FKu}, \cite{FN}, \cite{FS}, \cite{GS2}.   In particular, the notion of solution given in this paper is one of many possibilities.  For instance, the work by Giga-Hamamuki-Nakayasu \cite{GHN} is based on 
properties of solutions along curves, while the papers by Ambrosio and Feng \cite{AF} and Gangbo and \'{S}wi\k{e}ch \cite{GS} use the local slope of a function on a metric space to define a type of 
solution.  In \cite{AF}, the authors exploited the special geometry of ${\cal M}_2$ to show that seemingly different notions of weak solutions coincide which yields a comparison principle among viscosity sub and supersolutions.  Comparison also allowed Gangbo and \'{S}wi\k{e}ch \cite{GS, GS2} to develop an existence theory via the classical Perron method. Unfortunately, we do not have a convenient way of comparing viscosity sub and supersolutions as defined in section \ref{SupersolnProof}. This is an open problem, and we 
hope to resolve it in a forthcoming work.

\par We were also motivated by the potential application of this work in the theory of mean field games \cite{LL,LL2}. According to the
this theory, a version of the model -- or ``Master" -- equation is for a function  $\cU: (0,T]\times \R^d\times{\cal M}_2\rightarrow \R$ satisfying
\begin{equation}\label{MasterEq}
-\partial_t\cU+ \frac{1}{2}|\nabla_x\cU|^2 +\int_{\R^d}\nabla_x\cU\cdot \nabla_\mu \cU d\mu = {\cal F}(x,\mu), \quad (t,x,\mu)\in (0,T)\times\R^d\times{\mathcal M_2}
\end{equation}
and the terminal condition
$$
\cU(T,x,\mu)=\cG(x,\mu).
$$
While this equation is not in the exact form of the HJE studied in this paper, our hope is that the methods we developed will shed some light on the difficult equation \eqref{MasterEq}.

\par Another fundamental question that inspired this study was: do minimizing paths exist for $\cU(\mu,t)$?
Using a compactness argument, we establish existence under the additional assumption that $\cG$ and $\cV$ are continuous with respect to the narrow
topology on $\cPp$; see proposition Corollary \ref{Existence}. We also sought to establish some versions of this existence result even if $\cG$ and $\cV$
are not necessarily narrowly continuous.
\par A simple setting for this problem is when
\begin{equation}\label{narrowGV}
{\cal G}(\mu)=\int_{\R^d}g(x)d\mu(x)\quad\quad \text{and}\quad\quad {\cal V}(\mu)=\int_{\R^d}V(x)d\mu(x).
\end{equation}
For instance, recall that if $V(x)=|x|^p$, then ${\cal V}$ defined above is continuous on $\cMp$ but it is not narrowly continuous (see remark 7.1.11 of \cite{AGS}). Nevertheless, we are able to make
an interesting statement in this direction. In particular, we are able to provide a formula \eqref{ActForm} for the corresponding action $\cU$ in terms of the classical action $u$.
\begin{thm}\label{thm2} Assume $g$ is Lipschitz continuous on $\R^d$, $\inf V>-\infty$, $V(x)=O(|x|^p)$ as $|x|\rightarrow \infty$, and define $\cG$ and $\cV$ by \eqref{narrowGV}. Then there is
a $T=T(V,p)>0$ for which:
 \\
(i) the Wasserstein action integral $\cU$ is given by the formula
\begin{equation}\label{ActForm}
\cU(\mu,t)=\int_{\R^d}u(x,t)d\mu(x),\quad (\mu,t)\in \cMp\times [0,T)
\end{equation}
where $u$ is the classical action defined in equation \eqref{ClassAction};  \\
(ii) for each $t\in (0,T)$, there is a Borel map $\Psi: \R^d\times [0,t]\rightarrow \R^d$ such that for each
$x\in \R^d$, $s\mapsto \Psi(x,s)$ is a minimizer for $u(x,t)$ and
\begin{equation}\label{optSig}
 \sigma(s):=\Psi(s)_{\#}\mu, \quad  s\in [0,t]
\end{equation}
is a minimizing path for $\cU(\mu,t)$.
\end{thm}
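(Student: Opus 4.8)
The plan is to prove the formula \eqref{ActForm} by establishing the matching inequalities $\cU(\mu,t)\le\int_{\R^d}u(x,t)\,d\mu(x)$ and $\cU(\mu,t)\ge\int_{\R^d}u(x,t)\,d\mu(x)$, and then to read off the optimal path of part (ii) from the construction used for the upper bound. First I would record the small-time estimates that pin down $T=T(V,p)$. Testing the constant path $\gamma\equiv x$ in \eqref{ClassAction} gives $u(x,t)\le g(x)-tV(x)\le C(1+|x|^p)$. Conversely, for a path $\gamma$ with $\gamma(t)=x$ one has $|\gamma(s)|^p\le 2^{p-1}(|x|^p+t^{p/q}\int_0^t|\dot\gamma|^p)$, so the hypotheses on $V$ ($\inf V>-\infty$, $V(x)=O(|x|^p)$) make the classical action bounded below by $\tfrac1p(1-Ct^{1+p/q})\int_0^t|\dot\gamma|^p-C(1+|x|^p)$; choosing $T$ small so that $1-CT^{1+p/q}>0$ forces minimizing sequences for $u(x,t)$ to have uniformly bounded $L^p$-speed, yields existence of a classical minimizer (by the usual weak-compactness and lower-semicontinuity argument, using $p>1$), confines every minimizer to a ball of radius $O(1+|x|)$ about $x$, and gives $|u(x,t)|\le C(1+|x|^p)$. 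In particular $u(\cdot,t)\in L^1(\mu)$ for every $\mu\in\cMp$, so the right side of \eqref{ActForm} is well defined.

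For the upper bound, and for part (ii), I would select classical minimizers measurably in the endpoint. Fix $t\in(0,T)$ and let $M(x)\subset C([0,t];\R^d)$ be the set of minimizers of $u(x,t)$. By the previous step its values are nonempty, and the uniform speed and position bounds together with Arzel\`a--Ascoli show that $\bigcup_{|x|\le R}M(x)$ is relatively compact, so each $M(x)$ is compact; lower semicontinuity of the action in the uniform topology and continuity of $u$ make the graph $\{(x,\gamma):\gamma(t)=x,\ \gamma\text{ minimizes }u(x,t)\}$ Borel. A measurable selection theorem then produces a Borel map $x\mapsto\gamma_x\in M(x)$, and $\Psi(x,s):=\gamma_x(s)$ is Borel in $x$, continuous in $s$, hence jointly Borel, with $\int_0^t\!\int_{\R^d}|\dot\Psi(x,s)|^p\,d\mu(x)\,ds<\infty$ by the speed bound. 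Then $\sigma(s):=\Psi(s)_\#\mu$ has $\sigma(t)=\mu$, and Minkowski's integral inequality gives $\sigma\in AC_p([0,t],\cMp)$ with $\|\dot\sigma(s)\|^p\le\int_{\R^d}|\dot\Psi(x,s)|^p\,d\mu(x)$. Since $\cG(\sigma(0))=\int g(\Psi(x,0))\,d\mu$ and $\cV(\sigma(s))=\int V(\Psi(x,s))\,d\mu$, substituting $\sigma$ into \eqref{GenAction} and using the optimality of each $\Psi(x,\cdot)$ for $u(x,t)$ yields $\cU(\mu,t)\le\int_{\R^d}u(x,t)\,d\mu(x)$.

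For the reverse inequality I would use the superposition principle (Theorem 8.2.1 of \cite{AGS}). Let $\sigma\in AC_p([0,t],\cMp)$ be any competitor with $\sigma(t)=\mu$ and let $v$ be a minimal velocity, so $\|\dot\sigma(s)\|=\|v(s)\|_{L^p(\sigma(s))}$ a.e.; the bound \eqref{VinLp} and $\sigma(s)\in\cPp$ place $v$ in the range of that theorem, which produces a probability measure $\eta$ on $C([0,t];\R^d)$ concentrated on solutions of $\dot\gamma(s)=v(\gamma(s),s)$ with $\sigma(s)=(e_s)_\#\eta$ for all $s$; moreover $\eta$-a.e. $\gamma$ lies in $AC_p$ and $\int\!\int_0^t|\dot\gamma(s)|^p\,ds\,d\eta=\int_0^t\|\dot\sigma(s)\|^p\,ds$. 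Using this together with $\cG(\sigma(0))=\int g(\gamma(0))\,d\eta$ and $\int_0^t\cV(\sigma(s))\,ds=\int\!\int_0^t V(\gamma(s))\,ds\,d\eta$ (Fubini, legitimate because $\inf V>-\infty$ and $s\mapsto\int|x|^p\,d\sigma(s)$ is bounded on $[0,t]$), the value of $\sigma$ in \eqref{GenAction} equals $\int\big[g(\gamma(0))+\int_0^t(\tfrac1p|\dot\gamma(s)|^p-V(\gamma(s)))\,ds\big]\,d\eta(\gamma)$, and the bracket is $\ge u(\gamma(t),t)$ by the definition of $u$; hence it is $\ge\int u(\gamma(t),t)\,d\eta=\int_{\R^d}u(x,t)\,d\mu(x)$. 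Taking the infimum over $\sigma$ gives $\cU(\mu,t)\ge\int_{\R^d}u(x,t)\,d\mu(x)$. Combining the two inequalities proves (i); since the path $\Psi(\cdot)_\#\mu$ from the second paragraph attains the common value, it is a minimizer for $\cU(\mu,t)$, which is (ii).

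I expect the main obstacle to be the measurable selection step and the joint Borel measurability of $\Psi$: one must check that the classical action is a Borel functional on $C([0,t];\R^d)$, that $M$ has compact values with Borel graph, and that the selection can be arranged jointly measurable in $(x,s)$. The quantitative small-time estimates are equally load-bearing, as they simultaneously fix $T=T(V,p)$, guarantee existence of classical minimizers, and supply the $L^p$-speed and $L^1(\mu)$ bounds used throughout both inequalities. A secondary but genuine difficulty is the integrability bookkeeping in the superposition argument --- verifying that $V(\gamma(\cdot))$ and $|\dot\gamma(\cdot)|^p$ are $\eta\otimes ds$-integrable, so that the Fubini interchanges and the curvewise comparison with $u(\gamma(t),t)$ are valid.
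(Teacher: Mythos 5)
Your proposal is correct and follows essentially the same strategy as the paper: the superposition principle (Theorem 8.2.1 of \cite{AGS}) for the lower bound, and a measurable selection of classical minimizers to construct the flow map $\Psi$ and the pushforward path for the upper bound and for part (ii). The only cosmetic difference is the justification of the selection step (you invoke a Borel-graph criterion, while the paper verifies lower semicontinuity of $x\mapsto\mathrm{dist}(\eta,F(x))$ and applies Theorem 8.3.1 of Aubin--Frankowska) and the label ``Minkowski'' for what is really a H\"{o}lder-in-time estimate yielding $\|\dot\sigma(s)\|^p\le\int|\partial_s\Psi(x,s)|^p\,d\mu(x)$.
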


\par When $\cG, \cV$ satisfy \eqref{narrowGV} for $g,V\in C^1(\R^d)$, we shall also see that any minimizing trajectory $\sigma$ satisfies the Euler-Possion equations which consist
of the continuity equation \eqref{ContEq} and the PDE
\begin{equation}\label{ClassEulerPoiss}
\partial_s(\sigma |v|^{p-2}v)+\nabla\cdot (\sigma |v|^{p-2}v\otimes v)=-\sigma \nabla V,\quad (x,s)\in \R^d\times (0,t).
\end{equation}
Observe this equation is a type of generalization of the classical Euler-Lagrange equations \eqref{ClassEulLag}.
Furthermore, it is not hard to see that if we formally differentiate the classical HJE with respect to the spatial variable, then $v=|\nabla u|^{q-2}\nabla u$ satisfies \eqref{ClassEulerPoiss}.
Fortunately, this heuristic observation can be made precise.

\begin{prop}\label{vequalDu} Assume the hypotheses of Theorem \ref{thm2} and that the classical value function $u$ is differentiable along its minimizing trajectories. Then any minimal velocity $v$ associated with
a minimizing path $\sigma$ for $\cU(\mu,t)$ satisfies
\begin{equation}\label{GradientStructure}
|v(x,s)|^{p-2}v(x,s) = \nabla u(x,s), \quad \sigma(s)\; \text{a.e. $x$}\in \R^d
\end{equation}
for Lebsegue almost every $s\in[0,t]$.
\end{prop}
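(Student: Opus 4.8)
The plan is to reduce the claim to the classical optimality relation \eqref{ClassOptEq} by means of the superposition principle of Ambrosio, Gigli and Savar\'e (Theorem 8.2.1 of \cite{AGS}). Fix $t\in(0,T)$, let $\sigma$ be a minimizing path for $\cU(\mu,t)$, and let $v$ be a minimal velocity for $\sigma$, so that $\|v(s)\|_{L^p(\sigma(s))}=\|\dot\sigma(s)\|$ for a.e. $s$ and the continuity equation \eqref{ContEq} holds. Applying superposition to this solution of \eqref{ContEq} produces a probability measure $\eta$ on $AC([0,t];\R^d)$, concentrated on curves $\gamma$ that solve $\dot\gamma(s)=v(\gamma(s),s)$ for a.e. $s$, with $\sigma(s)=(e_s)_\#\eta$ for every $s\in[0,t]$, where $e_s(\gamma):=\gamma(s)$. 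Since $\int\!\!\int_0^t|\dot\gamma(s)|^p\,ds\,d\eta(\gamma)=\int_0^t\|\dot\sigma(s)\|^p\,ds<\infty$, one checks that $\eta$-a.e. $\gamma$ in fact lies in $AC_p([0,t];\R^d)$.

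Next I would show that $\eta$-a.e. $\gamma$ is a minimizer for the classical value function $u(\gamma(t),t)$. Because $\cG$ and $\cV$ are the linear functionals in \eqref{narrowGV}, $\sigma(s)=(e_s)_\#\eta$, and $\dot\gamma(s)=v(\gamma(s),s)$ for $\eta$-a.e. $\gamma$, Fubini's theorem (legitimate since $\inf V>-\infty$ bounds the integrand below, and $V(x)=O(|x|^p)$ together with $\sup_{s\le t}\int_{\R^d}|x|^p\,d\sigma(s)<\infty$ bounds it above) gives
\begin{equation*}
\cU(\mu,t)=\int_{AC_p([0,t];\R^d)}\left[g(\gamma(0))+\int_0^t\left(\tfrac1p|\dot\gamma(s)|^p-V(\gamma(s))\right)ds\right]d\eta(\gamma).
\end{equation*}
Each bracketed term is at least $u(\gamma(t),t)$, since $\gamma$ is admissible for $u(\gamma(t),t)$ in \eqref{ClassAction}; integrating against $\eta$ and using $\sigma(t)=\mu$ yields $\cU(\mu,t)\ge\int_{\R^d}u(x,t)\,d\mu(x)$. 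By Theorem \ref{thm2}(i) this inequality is an equality, so the bracket must coincide with $u(\gamma(t),t)$ for $\eta$-a.e. $\gamma$; that is, $\eta$-a.e. $\gamma$ is a minimizer for $u(\gamma(t),t)$.

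To conclude, since $u$ is differentiable along its minimizing trajectories, for $\eta$-a.e. $\gamma$ the function $u$ is differentiable at $(\gamma(s),s)$ for a.e. $s$, and \eqref{ClassOptEq} gives $|\dot\gamma(s)|^{p-2}\dot\gamma(s)=\nabla u(\gamma(s),s)$. Combining with $\dot\gamma(s)=v(\gamma(s),s)$ and applying Fubini on $AC_p([0,t];\R^d)\times[0,t]$ (with measure $\eta\otimes\mathcal L^1$) to interchange ``$\eta$-a.e. $\gamma$, a.e. $s$'' into ``a.e. $s$, $\eta$-a.e. $\gamma$'', I obtain that for a.e. $s$ the identity $|v(\gamma(s),s)|^{p-2}v(\gamma(s),s)=\nabla u(\gamma(s),s)$ holds for $\eta$-a.e. $\gamma$. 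Pushing forward by $e_s$ and recalling $\sigma(s)=(e_s)_\#\eta$ turns this into \eqref{GradientStructure}.

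The hard part will be making the second step fully rigorous for an \emph{arbitrary} minimizer $\sigma$: one must be sure that the superposition measure $\eta$ is genuinely carried by curves to which \eqref{ClassOptEq} applies, and that the exceptional set of pairs $(\gamma,s)$ on which $u$ fails to be differentiable is $\eta\otimes\mathcal L^1$-null under the stated hypothesis alone. The disintegration identity for the action and the interchanges of integration order above should be routine once the integrability furnished by \eqref{Vgrowth} and the growth hypotheses on $g$ and $V$ has been recorded.
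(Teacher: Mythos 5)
Your proof is correct and follows essentially the same route as the paper: apply the superposition principle (Theorem 8.2.1 of \cite{AGS}) to $\sigma$ and its minimal velocity, use the identity $\cU(\mu,t)=\int_{\R^d}u(x,t)\,d\mu(x)$ from Theorem \ref{thm2} to force equality in the disintegrated action and thereby conclude that $\eta$-a.e.\ curve $\gamma$ is a classical minimizer for $u(\gamma(t),t)$, then invoke \eqref{ClassOptEq} together with $\dot\gamma(s)=v(\gamma(s),s)$ and push forward by $e_s$. The caveat you raise at the end is already resolved by your own argument: once $\eta$-a.e.\ $\gamma$ is shown to be a minimizer, the standing hypothesis that $u$ is differentiable along minimizing trajectories makes \eqref{ClassOptEq} applicable to those very curves, so no additional null-set analysis beyond the Fubini interchange is needed.
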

\noindent
We remark that conditions can be imposed on $g$ and $V$ to ensure a classical value function $u$ is differentiable along its minimizing trajectories. 
See for instance section I.9 and I.10 of \cite{FS} or section 3.3 of \cite{Bardi}.  We also find it is interesting to compare the above proposition to the classical condition for optimality \eqref{ClassOptEq}.

\par The organization of this paper is as follows. In section \ref{PropSec}, we establish various properties of generalized actions including
dynamic programming, some simple estimates and continuity assertions. We go on to present some examples of generalized value functions and prove Theorem \ref{thm2} in section \ref{examples}; we also give some
computations to suggest that a more general theory exists than what is presented in this paper.  In section \ref{MinPaths}, we investigate properties
of minimizing paths and verify a useful compactness theorem. Finally, in section \ref{SupersolnProof} we verify Theorem \ref{thm1} and justify that $\cU$ is a viscosity solution of the HJE \eqref{WassHJE}.  The authors especially thank Wilfrid Gangbo and Andrzej \'{S}wi\k{e}ch for their insightful discussions, and the authors appreciate the hospitality of Georg-August-Universit\"{a}t G\"{o}ttingen and the Universitat Polit\`{e}cnica de Catalunya, which where most of this paper was written.

\section{Basic properties}\label{PropSec}
The classical action \eqref{ClassAction} is finite valued on some interval $[0,T)$ provided $V(x)=O(|x|^p)$, as $|x|\rightarrow \infty$.  This follows easily from
a version of the Poincar\'{e} inequality.  We note that it may be that $\lim_{t\uparrow T}u(x,t)=-\infty$; for instance, see Examples \ref{Power2Ex} and \ref{PowerpEx} below. Consequently, we
expect to require a similar condition on $\cV$ for the generalized action $\cU$ to be finite valued on an interval $[0,T)$; this is precisely why
we are assuming \eqref{Vgrowth}. \par In order to verify this claim, establish dynamic programming and verify continuity properties of generalized action, we will need a version of the Poincar\'{e} inequality in the space of measures. The proof shall be omitted
as it turns out to be straightforward and only a slight variation of Proposition 2 proved in \cite{G2}.
\begin{lem} For each $p\in [1,\infty)$, there is a positive constant $C_p$ such that
\begin{equation}\label{Poincare}
\left(\int^T_0W_p(\sigma(t),\sigma(T))^pdt\right)^{1/p}\le C_p T\left(\int^T_0||\dot\sigma(t)||^pdt\right)^{1/p}
\end{equation}
for each $\sigma\in AC_p((0,T), \cMp)$.
\end{lem}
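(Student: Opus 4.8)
The plan is to prove the Poincaré-type inequality \eqref{Poincare} by reducing it to the classical one-dimensional (in time) Poincaré inequality, using the metric structure of $AC_p$ paths. First I would recall that for $\sigma\in AC_p((0,T),\cMp)$ and $0\le t\le T$ one has the pointwise bound
$$
W_p(\sigma(t),\sigma(T))\le \int^T_t \|\dot\sigma(u)\|\,du =: \phi(t),
$$
so it suffices to estimate $\int^T_0\phi(t)^p\,dt$. The function $\phi$ is absolutely continuous on $[0,T]$ with $\phi(T)=0$ and $|\phi'(t)| = \|\dot\sigma(t)\|$ a.e. Hence the inequality follows from the scalar estimate
$$
\left(\int^T_0\phi(t)^p\,dt\right)^{1/p}\le C_p\,T\left(\int^T_0|\phi'(t)|^p\,dt\right)^{1/p}
$$
valid for all absolutely continuous $\phi$ on $[0,T]$ vanishing at one endpoint, which is the standard Poincaré (or Hardy) inequality on an interval; the constant $C_p$ depends only on $p$ and the factor $T$ is forced by scaling $t\mapsto t/T$.

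The one-dimensional step itself I would justify either by writing $\phi(t)=-\int^T_t\phi'(u)\,du$, applying Hölder to get $|\phi(t)|\le (T-t)^{1/q}\left(\int^T_0|\phi'|^p\right)^{1/p}$, and then integrating $t\mapsto (T-t)^{p/q}$ over $[0,T]$; this yields an admissible constant $C_p$ (one may even take $C_p=1$ after absorbing the resulting numerical factor, though the precise value is immaterial). Alternatively one invokes the classical Poincaré inequality for $W^{1,p}_0$-type functions on a bounded interval and tracks the dependence on the interval length via rescaling. Either route is elementary.

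The only genuine points requiring a line of care — and the reason the authors say this is "only a slight variation" of Proposition 2 of \cite{G2} — are (a) the passage from the metric-space path $\sigma$ to the scalar function $\phi$, which is legitimate precisely because $AC_p$ paths admit a metric derivative $\|\dot\sigma\|\in L^p([0,T])$ by definition and satisfy $W_p(\sigma(s_1),\sigma(s_2))\le\int^{s_2}_{s_1}\|\dot\sigma\|$, and (b) checking that $W_p(\sigma(t),\sigma(T))$ is measurable in $t$ so that the left-hand integral makes sense, which follows from continuity of $t\mapsto\sigma(t)$ in $W_p$. I do not expect any serious obstacle: the whole content is the scalar Poincaré inequality plus the triangle-inequality bound on $W_p$ along the path, and the proof is therefore omitted in the paper for exactly that reason.
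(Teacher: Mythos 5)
Your proposal is correct, and since the paper explicitly omits the proof (referring only to Proposition 2 of \cite{G2} as a near-identical precursor), your argument supplies exactly the reasoning the authors describe as ``straightforward'': bound $W_p(\sigma(t),\sigma(T))$ pointwise by $\phi(t)=\int_t^T\|\dot\sigma\|$ using the definition of the metric derivative, then apply the one-dimensional Hardy/Poincaré inequality to $\phi$. Your Hölder computation even yields the explicit constant $C_p=p^{-1/p}\le 1$ (since $\int_0^T(T-t)^{p/q}\,dt=T^p/p$), and the measurability of $t\mapsto W_p(\sigma(t),\sigma(T))$ is immediate from the $W_p$-continuity of $\sigma$, so the proof is complete.
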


\begin{cor}\label{UFinite}
Suppose $T$ satisfies
\begin{equation}\label{TChoice}
p(2C_pT)^p\alpha <1
\end{equation}
where $\alpha$ is the constant in \eqref{Vgrowth} and $C_p$ is the constant from the previous lemma. Then for each $(\mu,t)\in \cMp\times (0,T)$,
$$
\cU(\mu,t)>-\infty.
$$
\end{cor}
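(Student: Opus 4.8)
The plan is to bound the functional being minimized in \eqref{GenAction} from below by a quantity that is finite and independent of the competitor path. We may assume $\alpha>0$, since if $\alpha\le 0$ then \eqref{Vgrowth} forces $\cV\le\beta$ on all of $\cMp$ and the desired bound is immediate. Fix $(\mu,t)\in\cMp\times(0,T)$, let $\sigma\in AC_p([0,t],\cMp)$ be any path with $\sigma(t)=\mu$, and write $E:=\int_0^t\|\dot\sigma(s)\|^p\,ds$. From the Lipschitz bound \eqref{GLip} together with the metric-derivative inequality followed by H\"older,
\[
W_p(\sigma(0),\mu)=W_p(\sigma(0),\sigma(t))\le\int_0^t\|\dot\sigma(s)\|\,ds\le t^{1/q}E^{1/p},
\]
so that $\cG(\sigma(0))\ge\cG(\mu)-Lt^{1/q}E^{1/p}$. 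For the potential term I would apply \eqref{Vgrowth}, the triangle inequality $W_p(\sigma(s),\varrho)\le W_p(\sigma(s),\mu)+W_p(\mu,\varrho)$, and the elementary bound $(a+b)^p\le 2^{p-1}(a^p+b^p)$ to obtain
\[
-\int_0^t\cV(\sigma(s))\,ds\;\ge\;-\alpha 2^{p-1}\int_0^t W_p(\sigma(s),\mu)^p\,ds\;-\;t\bigl(\alpha 2^{p-1}W_p(\mu,\varrho)^p+\beta\bigr).
\]

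The next step is to control the remaining integral with the Poincar\'e inequality \eqref{Poincare}, applied to the restriction of $\sigma$ to $[0,t]$ (with $\sigma(t)=\mu$ in the role of the endpoint), which gives $\int_0^tW_p(\sigma(s),\mu)^p\,ds\le(C_pt)^pE\le(C_pT)^pE$. Inserting the three estimates into \eqref{GenAction} together with the kinetic term $\tfrac1p E$ yields
\[
\cG(\sigma(0))+\int_0^t\Bigl(\tfrac1p\|\dot\sigma(s)\|^p-\cV(\sigma(s))\Bigr)ds\;\ge\;\cG(\mu)-t\bigl(\alpha 2^{p-1}W_p(\mu,\varrho)^p+\beta\bigr)-Lt^{1/q}E^{1/p}+\Bigl(\tfrac1p-\alpha 2^{p-1}(C_pT)^p\Bigr)E.
\]
By the choice \eqref{TChoice} of $T$ one has $\alpha 2^{p-1}(C_pT)^p<\tfrac1{2p}$, so the coefficient of $E$ exceeds $\tfrac1{2p}>0$; hence the function $E\mapsto\bigl(\tfrac1p-\alpha 2^{p-1}(C_pT)^p\bigr)E-Lt^{1/q}E^{1/p}$ is continuous and coercive on $[0,\infty)$, and therefore bounded below by a finite constant depending only on $p,L,t,\alpha$ and $C_p$. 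Consequently the right-hand side above is bounded below by a finite number that does not depend on $\sigma$, and taking the infimum over all admissible $\sigma$ proves $\cU(\mu,t)>-\infty$.

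I do not anticipate a genuine obstacle; the argument is a chain of routine inequalities. The one point that must be watched is that the coefficient of the kinetic energy $E$ stays strictly positive after absorbing the growth of $\cV$ — this is precisely what the smallness condition \eqref{TChoice} buys, and one should check that the factor $2^p$ appearing there has been taken generously enough to dominate the $2^{p-1}$ produced by the triangle-inequality split of $W_p(\sigma(s),\varrho)^p$ while still leaving room to absorb the lower-order term $Lt^{1/q}E^{1/p}$ coming from the Lipschitz bound on $\cG$. (If preferred, that last term could instead be absorbed directly by Young's inequality at the cost of enlarging the additive constant; the precise bookkeeping of constants is immaterial to the conclusion.)
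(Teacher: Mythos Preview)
Your proof is correct and follows essentially the same route as the paper's: bound $\cG(\sigma(0))$ via the Lipschitz estimate, bound $\cV(\sigma(s))$ via \eqref{Vgrowth} and the triangle inequality, absorb the resulting $\int_0^t W_p(\sigma(s),\mu)^p\,ds$ into the kinetic term using the Poincar\'e inequality \eqref{Poincare}, and then observe that the remaining one-variable function is bounded below. The only cosmetic differences are that you use the sharper constant $2^{p-1}$ in the triangle-inequality split (the paper uses $2^p$) and you minimize over $E=\int_0^t\|\dot\sigma\|^p\,ds$ rather than over $z=W_p(\sigma(0),\mu)/t$; neither changes the substance of the argument.
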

\begin{proof}
By our assumptions on ${\cal G}$ \eqref{GLip} and ${\cal V}$ \eqref{Vgrowth},
\begin{align*}
{\cal G}(\sigma(0))+\int^t_0\left(\frac{1}{p}||\dot{\sigma}(s)||^p -{\cal V}(\sigma(s))\right)ds &\ge {\cal G}(\mu) - LW_p(\sigma(0),\mu) \\
&\quad +\int^t_0\left(\frac{1}{p}||\dot{\sigma}(s)||^p -(\alpha(W_p(\sigma(s),\varrho)^p+\beta)\right)ds\\
&\ge {\cal G}(\mu) - LW_p(\sigma(0),\mu) -\left(\beta +2^pW_p(\varrho,\mu)^p\right) t\\
&\quad +\int^t_0\left(\frac{1}{p}||\dot{\sigma}(s)||^p -\alpha 2^pW_p(\sigma(s),\mu)^p\right)ds
\end{align*}
for any path $\sigma$ that is admissible for $\cU(\mu, t)$. The Poincar\'{e} inequality \eqref{Poincare} then implies
\begin{align}\label{PoincareLower}
{\cal G}(\sigma(0))+\int^t_0\left(\frac{1}{p}||\dot{\sigma}(s)||^p -{\cal V}(\sigma(s))\right)ds &\ge{\cal G}(\mu) - LW_p(\sigma(0),\mu) -\left(\beta +2^p\alpha W_p(\varrho,\mu)^p\right) t \nonumber \\
&\quad +\left(\frac{1}{p}-(2C_p T)^p\alpha\right)\int^t_0||\dot{\sigma}(s)||^pds \\
 &\ge{\cal G}(\mu) - LW_p(\sigma(0),\mu) -\left(\beta +2^p\alpha W_p(\varrho,\mu)^p\right) t \nonumber \\
&\quad +\left(\frac{1}{p}-(2C_p T)^p\alpha\right)\frac{W_p(\sigma(0),\mu)^p}{t^{p-1}}\nonumber \\
&\ge {\cal G}(\mu) -\left(\beta +2^p\alpha W_p(\varrho,\mu)^p\right)t \nonumber \\
& +t\inf_{z\ge 0}\left\{-Lz+ \left(\frac{1}{p}-(2C_p T)^p\alpha\right)z^p\right\}.\nonumber
\end{align}
We conclude as this lower bound is independent of $\sigma$.
\end{proof}
\begin{rem}
It is not difficult to see that if ${\cal V}$ satisfies an inequality such as \eqref{Vgrowth} for any power less than $p$, then we can use H\"{o}lder's inequality to make $\alpha$ as small as desired in \eqref{Vgrowth} for an appropriate $\beta$.
In this case, we can choose $T$ as large as desired and so $\cU(\mu,t)>-\infty$ for all $(\mu,t)\in \cMp\times [0,\infty)$.
\end{rem}\label{YoungRem}
The next fundamental property of the value function we present is the {\it dynamic programming principle}, which is a relationship between the functional $\mu\mapsto \cU(\mu,t)$ and its values at previous times $s\in [0,t]$. We omit the proof here as a standard argument from classical, deterministic control theory applies (Lemma 4.1 \cite{FS}); this was discovered in the paper \cite{G} for $p=2$.
\begin{prop}
Assume $T$ satisfies \eqref{TChoice}. For $0\le s\le t< T$, and $\mu\in M_p$
\begin{equation}\label{DPP}
\cU(\mu, t)=\inf\left\{\cU(\sigma(s),s)+\int^t_s\left(\frac{1}{p}||\dot{\sigma}(r)||^p -{\cal V}(\sigma(r))\right)dr: \sigma\in AC_p([s,t], M_p),\; \sigma(t)=\mu\right\}.
\end{equation}
\end{prop}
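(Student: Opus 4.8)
The plan is to establish the two inequalities in \eqref{DPP} separately, imitating the classical argument of Lemma 4.1 of \cite{FS}. Three soft preliminaries are used throughout. First, $\cU$ is real valued on $\cMp\times(0,T)$: it is $>-\infty$ by Corollary \ref{UFinite}, and testing \eqref{GenAction} with the constant path $\sigma\equiv\nu$ gives $\cU(\nu,r)\le\cG(\nu)-r\,\cV(\nu)<\infty$. Second, since $\cV\in C(\cMp)$ and every admissible path is $\tfrac1q$-H\"older continuous (by H\"older's inequality applied to the metric-derivative bound), $r\mapsto\cV(\sigma(r))$ is bounded on the compact interval of integration, so every running-cost integral appearing below is a well-defined real number and no $\infty-\infty$ cancellation occurs. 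Third, I will use the standard gluing and restriction properties of $AC_p$ curves in a metric space, which follow from the characterization of absolutely continuous curves via the metric speed (see \cite{AGS}): if $\sigma_1\in AC_p([0,s],\cMp)$ and $\sigma_2\in AC_p([s,t],\cMp)$ satisfy $\sigma_1(s)=\sigma_2(s)$, then the concatenation $\sigma$ lies in $AC_p([0,t],\cMp)$ with $\|\dot\sigma\|=\|\dot\sigma_1\|$ a.e.\ on $(0,s)$ and $\|\dot\sigma\|=\|\dot\sigma_2\|$ a.e.\ on $(s,t)$; and conversely the restriction of an $AC_p$ curve to a subinterval is $AC_p$ with the same metric speed there.

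For the inequality ``$\le$'': fix $\sigma\in AC_p([s,t],\cMp)$ with $\sigma(t)=\mu$ and $\varepsilon>0$, and pick $\tilde\sigma\in AC_p([0,s],\cMp)$ with $\tilde\sigma(s)=\sigma(s)$ that is $\varepsilon$-optimal in the definition of $\cU(\sigma(s),s)$ (possible since that value is finite). Concatenating $\tilde\sigma$ and $\sigma$ yields an admissible path for $\cU(\mu,t)$; by the gluing fact its total cost equals the cost of $\tilde\sigma$ on $[0,s]$ plus $\int_s^t\bigl(\tfrac1p\|\dot\sigma(r)\|^p-\cV(\sigma(r))\bigr)\,dr$, hence is at most $\cU(\sigma(s),s)+\varepsilon+\int_s^t\bigl(\tfrac1p\|\dot\sigma(r)\|^p-\cV(\sigma(r))\bigr)\,dr$. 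Therefore $\cU(\mu,t)$ is bounded by this quantity; letting $\varepsilon\downarrow0$ and taking the infimum over all such $\sigma$ gives the ``$\le$'' half of \eqref{DPP}.

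For the inequality ``$\ge$'': let $\rho\in AC_p([0,t],\cMp)$ be any path with $\rho(t)=\mu$. Its restriction $\rho|_{[s,t]}$ is admissible for the infimum on the right of \eqref{DPP}, while $\rho|_{[0,s]}$ is admissible in the definition of $\cU(\rho(s),s)$, so $\cU(\rho(s),s)\le\cG(\rho(0))+\int_0^s\bigl(\tfrac1p\|\dot\rho(r)\|^p-\cV(\rho(r))\bigr)\,dr$. Adding $\int_s^t\bigl(\tfrac1p\|\dot\rho(r)\|^p-\cV(\rho(r))\bigr)\,dr$ to both sides and using additivity of the integral over $[0,s]\cup[s,t]$ (together with the restriction fact) shows that the right-hand side of \eqref{DPP} is at most $\cG(\rho(0))+\int_0^t\bigl(\tfrac1p\|\dot\rho(r)\|^p-\cV(\rho(r))\bigr)\,dr$. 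Taking the infimum over all such $\rho$ gives ``$\ge$'', and combining the two inequalities proves \eqref{DPP}.

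The only step that is not pure bookkeeping is the gluing property of $AC_p$ curves and the claim that the metric speed of the glued curve is the piecewise one; I expect this to be the main point to state carefully, although it is routine and is a standard fact about absolutely continuous curves in metric spaces used repeatedly in \cite{AGS,G,G2}. The edge cases $s=0$ and $s=t$ are automatically covered: when $s=0$ the concatenation/restriction reduces to the trivial one-point curve and $\cU(\cdot,0)=\cG$, and when $s=t$ both sides of \eqref{DPP} are literally $\cU(\mu,t)$.
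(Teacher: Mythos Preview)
Your proof is correct and is precisely the standard deterministic-control argument the paper invokes: the authors omit the proof, citing Lemma 4.1 of \cite{FS}, and your two-inequality argument via concatenation (for ``$\le$'') and restriction (for ``$\ge$'') of $AC_p$ paths is exactly that lemma carried over to $\cMp$. The preliminaries you flag (finiteness of $\cU$, integrability of $r\mapsto\cV(\sigma(r))$, and the gluing/restriction behavior of metric speed) are the right soft ingredients and are all available from the paper's setup and \cite{AGS}.
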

However, we will provide a careful proof of continuity of the action on $\cMp\times (0,T)$ below, as there is a slight oversight in the argument given in \cite{G}. Namely, a change of variables in Lemma 3.7 of \cite{G} was performed incorrectly.  However, the method presented in that lemma can be adapted without
too much difficulty.
\begin{prop}
$\cU$ is jointly continuous on $\cMp\times (0, T)$.
\end{prop}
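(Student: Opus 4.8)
The plan is to prove sequential continuity: fix a point $(\mu,t)\in\cMp\times(0,T)$ and an arbitrary sequence $(\mu_n,t_n)\to(\mu,t)$ in $\cMp\times(0,T)$, and show $\cU(\mu_n,t_n)\to\cU(\mu,t)$; this suffices since $\cMp\times(0,T)$ is metrizable. Both the upper and the lower estimate will rest on a single competitor construction. Given an $AC_p$ path $\sigma$ on $[0,\tau]$ with $\sigma(\tau)=\rho$, a target measure $\rho'$, a target time $\tau'$, and a small parameter $0<\delta<\tau'$, form the path on $[0,\tau']$ that first runs $\sigma$ under the affine reparametrization of $[0,\tau'-\delta]$ onto $[0,\tau]$ and then follows a constant-speed $W_p$-geodesic from $\rho$ to $\rho'$ on $[\tau'-\delta,\tau']$; such geodesics exist in $(\cMp,W_p)$ for $1<p<\infty$ (see \cite{AGS,V}), and splicing them to a reparametrized $AC_p$ path yields an admissible competitor. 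The one genuinely delicate point — and precisely the computation mishandled in Lemma 3.7 of \cite{G} — is the change of variables under the affine time change: if $\eta$ denotes the reparametrization of $\sigma$ onto $[0,b]$ then $\int_0^b\|\dot\eta\|^p\,ds=(\tau/b)^{p-1}\int_0^\tau\|\dot\sigma\|^p\,ds$, with exponent $p-1$, not $p$; the geodesic segment contributes exactly $\tfrac1p\,W_p(\rho,\rho')^p/\delta^{p-1}$ to the kinetic integral and $-\int\cV$ over a set of $W_p$-diameter at most $W_p(\rho,\rho')$ around $\rho$ to the potential integral. In every application below we take $\delta=\delta_n:=\max\{W_p(\rho,\rho')^{1/2},\,|\tau-\tau'|^{1/2},\,1/n\}$, so that $\delta_n\to0$, $b_n:=\tau'-\delta_n\to\tau$, and $W_p(\rho,\rho')^p/\delta_n^{p-1}\le W_p(\rho,\rho')^{(p+1)/2}\to0$.

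\emph{Upper semicontinuity.} Fix $\varepsilon>0$ and an $\varepsilon$-optimal competitor $\sigma$ for $\cU(\mu,t)$, so $\sigma(t)=\mu$ and $\sigma([0,t])$ is compact, whence $\cV$ is bounded on it and $\int_0^t\cV(\sigma)$ is finite. Applying the construction with $\rho=\mu$, $\rho'=\mu_n$, $\tau=t$, $\tau'=t_n$ produces a competitor $\tilde\sigma_n$ for $\cU(\mu_n,t_n)$ whose action is $\cG(\sigma(0))+(t/b_n)^{p-1}\int_0^t\tfrac1p\|\dot\sigma\|^p-(b_n/t)\int_0^t\cV(\sigma)+\tfrac1p W_p(\mu,\mu_n)^p/\delta_n^{p-1}-\int_{b_n}^{t_n}\cV(\mathrm{geodesic})$. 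Since $b_n\to t$, $\delta_n\to0$, and $\cV$ is continuous near $\mu$, this converges to $\cG(\sigma(0))+\int_0^t(\tfrac1p\|\dot\sigma\|^p-\cV(\sigma))\le\cU(\mu,t)+\varepsilon$, so $\limsup_n\cU(\mu_n,t_n)\le\cU(\mu,t)+\varepsilon$; letting $\varepsilon\downarrow0$ gives $\limsup_n\cU(\mu_n,t_n)\le\cU(\mu,t)$.

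\emph{Lower semicontinuity.} First, $\{\cU(\mu_n,t_n)\}$ is bounded: above by testing with the constant path, $\cU(\mu_n,t_n)\le\cG(\mu_n)-t_n\cV(\mu_n)$, and below by the explicit lower bound from the proof of Corollary \ref{UFinite} (take the infimum in \eqref{PoincareLower}), both of which are bounded as $n\to\infty$ because $\mu_n\to\mu$ and $t_n\to t$. Fix $\varepsilon>0$ and an $\varepsilon$-optimal competitor $\sigma_n$ for $\cU(\mu_n,t_n)$. Using \eqref{PoincareLower} for $\sigma_n$ while keeping the kinetic term, together with $W_p(\sigma_n(0),\mu_n)\le t_n^{1/q}(\int_0^{t_n}\|\dot\sigma_n\|^p)^{1/p}$, the boundedness of $\cU(\mu_n,t_n)$, and $\mu_n\to\mu$, one obtains a uniform bound $K_n:=\int_0^{t_n}\tfrac1p\|\dot\sigma_n\|^p\le C$ (the leading term $c_0K_n$ with $c_0:=1-p(2C_pT)^p\alpha>0$ dominates the $O(K_n^{1/p})$ correction); consequently the $\sigma_n$ all remain in a fixed closed $W_p$-ball about $\varrho$, and, crucially, $\int_0^{t_n}\cV(\sigma_n)=\cG(\sigma_n(0))+K_n-\cU(\mu_n,t_n)-\varepsilon$ is bounded — this sidesteps the fact that $\cV$ need not be bounded below on $W_p$-bounded sets. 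Now apply the construction with $\rho=\mu_n$, $\rho'=\mu$, $\tau=t_n$, $\tau'=t$ to get a competitor $\tilde\sigma_n$ for $\cU(\mu,t)$; by the same identities its action equals $\cG(\sigma_n(0))+(t_n/b_n)^{p-1}K_n-(b_n/t_n)\int_0^{t_n}\cV(\sigma_n)+\tfrac1p W_p(\mu_n,\mu)^p/\delta_n^{p-1}-\int_{b_n}^{t}\cV(\mathrm{geodesic})$, which by $b_n\to t$, $t_n\to t$, the uniform bounds on $K_n$ and on $\int_0^{t_n}\cV(\sigma_n)$, $W_p(\mu_n,\mu)^p/\delta_n^{p-1}\to0$, and continuity of $\cV$ near $\mu$, equals $\cG(\sigma_n(0))+K_n-\int_0^{t_n}\cV(\sigma_n)+o(1)=\cU(\mu_n,t_n)+\varepsilon+o(1)$. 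Since this action is at least $\cU(\mu,t)$, we get $\cU(\mu,t)\le\cU(\mu_n,t_n)+\varepsilon+o(1)$, hence $\cU(\mu,t)\le\liminf_n\cU(\mu_n,t_n)+\varepsilon$; letting $\varepsilon\downarrow0$ and combining with the previous step yields $\cU(\mu_n,t_n)\to\cU(\mu,t)$, i.e. joint continuity on $\cMp\times(0,T)$.

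The main obstacle is not the overall scheme, which is standard deterministic-control bookkeeping, but the careful handling of the competitor construction: getting the reparametrization exponent right (the correction to \cite{G}) and, on the lower-semicontinuity side, producing the uniform-in-$n$ estimates for the metric derivatives and for $\int\cV(\sigma_n)$ \emph{without} any lower growth bound on $\cV$ — achieved by reading $\int\cV(\sigma_n)$ off from the action, $\cG(\sigma_n(0))$, and $K_n$, all of which are controlled once $\cU(\mu_n,t_n)$ is known to be bounded.
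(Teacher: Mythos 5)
Your argument is correct and is essentially the same construction as the paper's: splice an affine reparametrization of a near-optimal competitor with a constant-speed Wasserstein geodesic to the new endpoint, carry out the change of variables carefully (with the exponent $p-1$, which is the error in \cite{G} that the paper flags), establish a uniform bound on $\int_0^{t_n}\|\dot\sigma_n\|^p$ for the lower-semicontinuity half, and pass to the limit. The only technical difference is that the paper keeps the geodesic occupying a fixed fraction $\delta$ of the time interval, obtains a $\delta$-dependent bound as $n\to\infty$, and then sends $\delta\downarrow 0$ — a double limit — whereas you choose $\delta_n:=\max\{W_p(\rho,\rho')^{1/2},|\tau-\tau'|^{1/2},1/n\}\to 0$ so that $W_p^p/\delta_n^{p-1}\to 0$ and both error sources vanish in a single pass; both work. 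You also make explicit (via the identity from $\varepsilon$-optimality plus the growth bound \eqref{Vgrowth}) why $\int_0^{t_n}\cV(\sigma_n)$ stays bounded even though $\cV$ need not be bounded below on $W_p$-balls — a point the paper compresses to "one checks".

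One small slip to fix: you write $\int_0^{t_n}\cV(\sigma_n)=\cG(\sigma_n(0))+K_n-\cU(\mu_n,t_n)-\varepsilon$; $\varepsilon$-optimality gives only the one-sided inequality $\int_0^{t_n}\cV(\sigma_n)>\cG(\sigma_n(0))+K_n-\cU(\mu_n,t_n)-\varepsilon$, which supplies the needed lower bound, while the upper bound on $\int_0^{t_n}\cV(\sigma_n)$ comes separately from \eqref{Vgrowth} and the uniform $W_p$-bound on the paths. The conclusion you draw (boundedness of $\int_0^{t_n}\cV(\sigma_n)$, hence $(1-b_n/t_n)\int_0^{t_n}\cV(\sigma_n)\to 0$) is unaffected.
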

\begin{proof}
Let $(\mu, t)\in \cMp\times (0,T)$ and $(\mu_n, t_n)$ be a sequence tending to $(\mu, t)$; without any loss of generality, let us suppose $t_n>0$ for each $n$. Assume $\sigma$ is an admissible path for $\cU(\mu,t)$ and for each $n>1$, define the path
$$
\sigma^n_\delta(\tau)=
\begin{cases}
\sigma\left(\frac{t}{(1-\delta)t_n}\tau\right), \quad 0\le \tau\le (1-\delta)t_n\\
\overline{\sigma}^n\left(\frac{\tau -(1-\delta)t_n}{\delta t_n}\right), \quad (1-\delta)t_n\le \tau\le t_n
\end{cases}
$$
where $\overline{\sigma}^n\in AC_p([0,1], \cMp)$ is a geodesic joining $\mu$ to $\mu_n$ and $\delta\in (0,1)$.  Clearly, $\sigma^n$ is
admissible for $\cU(\mu_n, t_n)$.

\par Consequently,
\begin{align*}
\cU(\mu_n,t_n) & \le {\cal G}(\sigma_\delta^n(0))+\int^{t_n}_0\left(\frac{1}{p}||\dot{\sigma}_\delta^n(\tau)||^p -{\cal V}(\sigma_\delta^n(\tau))\right)d\tau\\
&= {\cal G}(\sigma(0))+ \int^{(1-\delta)t_n}_0\left(\frac{1}{p}||\dot{\sigma}_\delta^n(\tau)||^p -{\cal V}(\sigma_\delta^n(\tau))\right)d\tau\\
&\quad + \int^{t_n}_{(1-\delta)t_n}\left(\frac{1}{p}||\dot{\sigma}_\delta^n(\tau)||^p -{\cal V}(\sigma_\delta^n(\tau))\right)d\tau \\
&= {\cal G}(\sigma(0)) +\left(\frac{t}{(1-\delta) t_n}\right)^{p-1}\int^{t}_0\frac{1}{p}||\dot{\sigma}(s)||^pds - \frac{(1-\delta)t_n}{t}\int^t_0{\cal V}(\sigma(s))ds\\
&+ \frac{1}{p(\delta t_n)^{p-1}}W_{p}(\mu,\mu_n)^p -\delta t_n\int^1_0 {\cal V}(\overline{\sigma}^n(s))ds
\end{align*}
Hence,
\begin{align*}
\limsup_{n\rightarrow \infty}\cU(\mu_n,t_n)&\le {\cal G}(\sigma(0)) +\left(\frac{1}{1-\delta}\right)^{p-1}\int^{t}_0\frac{1}{p}||\dot{\sigma}(s)||^pds-(1-\delta)\int^t_0{\cal V}(\sigma(s))ds.
\end{align*}
As $\delta\in(0,1)$ is arbitrary, we can send $\delta\rightarrow 0^+$ to conclude $\limsup_{n\rightarrow \infty}\cU(\mu_n,t_n)\le \cU(\mu,t)$.

\par Now choose a sequence of positive numbers $\epsilon_n$ tending to 0, as $n\rightarrow \infty$.  There is $\sigma_n$ admissible for $\cU(\mu_n, t_n)$ such that
$$
\cU(\mu_n,t_n)> - \epsilon_n +  {\cal G}(\sigma^n(0))+\int^{t_n}_0\left(\frac{1}{p}||\dot{\sigma}^n(\tau)||^p -{\cal V}(\sigma^n(\tau))\right)d\tau
$$
For $\delta\in (0,1)$, define the sequence
$$
\sigma^n_\delta(s)=
\begin{cases}
\sigma^n\left(\frac{t_n}{(1-\delta)t}s\right), \quad 0\le s\le (1-\delta)t\\
\overline{\sigma}^n\left(\frac{s -(1-\delta)t}{\delta t}\right), \quad (1-\delta)t\le s\le t
\end{cases}.
$$
Here $\overline{\sigma}^n\in AC_p([0,1], \cMp)$ is a geodesic connecting $\mu_n$ to $\mu$. Observe $\sigma^n_\delta$ is admissible for $\cU(\mu,t)$, and so
\begin{align*}
\cU(\mu,t) & \le {\cal G}(\sigma_\delta^n(0))+\int^{t}_0\left(\frac{1}{p}||\dot{\sigma}_\delta^n(s)||^p -{\cal V}(\sigma_\delta^n(s))\right)ds\\
&= {\cal G}(\sigma^n(0)) +\left(\frac{t_n}{(1-\delta) t}\right)^{p-1}\int^{t_n}_0\frac{1}{p}||\dot{\sigma^n}(\tau)||^pd\tau - \frac{(1-\delta)t}{t_n}\int^{t_n}_0{\cal V}(\sigma^n(\tau))d\tau\\
&+ \frac{1}{p(\delta t)^{p-1}}W_{p}(\mu_n,\mu)^p -\delta t\int^1_0 {\cal V}(\overline{\sigma}^n(s))ds\\
&<\epsilon_n +\cU(\mu_n,t_n) +  \frac{1}{p(\delta t)^{p-1}}W_{p}(\mu_n,\mu)^p -\delta t\int^1_0 {\cal V}(\overline{\sigma}^n(s))ds \\
& + \frac{1}{p}\left(\left(\frac{t_n}{(1-\delta) t}\right)^{p-1} -1\right)\int^{t_n}_0||\dot\sigma^n(\tau)||^pd\tau -\left(\frac{(1-\delta)t_n}{t}-1\right)\int^{t_n}_0{\cal V}(\sigma^n(\tau))d\tau.
\end{align*}
One checks that since $t_n<T$ for $n$ large, $\int^{t_n}_0||\dot\sigma^n(\tau)||^pd\tau$ is bounded independently of $n\in \N$. In particular, the
sequence of paths $\{\sigma^n(\tau)\in \cMp: 0\le \tau\le t_n\}$ is uniformly bounded. Hence, we may send $n\rightarrow \infty$ and $\delta\rightarrow 0^+$ above to conclude
$$
\cU(\mu,t) \le \liminf_{n\rightarrow \infty}\cU(\mu_n, t_n).
$$

\end{proof}
\begin{prop}
Assume ${\cal V}$ is uniformly continuous. Then $\mu\mapsto \cU(\mu,t)$ is uniformly continuous for each
$t\ge 0$.
\end{prop}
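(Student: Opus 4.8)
The plan is to reuse the geodesic-splicing construction from the preceding continuity proof, supplemented by one new ingredient which is exactly where uniform continuity of $\cV$ is used in an essential way: a bound on the kinetic energy of a near-optimal path for $\cU(\nu,t)$ that is independent of $\nu$. First, $t=0$ is trivial since $\cU(\cdot,0)=\cG$ is Lipschitz by \eqref{GLip}, so fix $t>0$. Since $\cV$ is uniformly continuous and $\cMp$ is a geodesic space, iterating the continuity estimate along a geodesic shows $|\cV(\rho)-\cV(\rho')|\le C_0(1+W_p(\rho,\rho'))$ for some constant $C_0$; in particular $\cV$ has at most linear growth, so by the remark following Corollary~\ref{UFinite}, $\cU$ is real-valued on all of $\cMp\times[0,\infty)$. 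Let $\omega$ be a nondecreasing modulus of continuity for $\cV$, so $\omega(0^+)=0$ and $\omega(r)\le C_0(1+r)$.

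\emph{A uniform energy bound.} Fix $\nu\in\cMp$ and $\epsilon\in(0,1]$, and choose $\sigma$ admissible for $\cU(\nu,t)$ whose action $\cG(\sigma(0))+\int^t_0\big(\tfrac1p\|\dot\sigma\|^p-\cV(\sigma)\big)\,ds$ is at most $\cU(\nu,t)+\epsilon$. Comparing $\cU(\nu,t)$ with the constant path $s\mapsto\nu$ gives $\cU(\nu,t)\le\cG(\nu)-t\cV(\nu)$, so, writing $E:=\int^t_0\|\dot\sigma\|^p\,ds$,
$$
\tfrac1p E \;\le\; \big[\cG(\nu)-\cG(\sigma(0))\big]+\int^t_0\big[\cV(\sigma(s))-\cV(\nu)\big]\,ds+\epsilon .
$$
Since $W_p(\sigma(s),\nu)=W_p(\sigma(s),\sigma(t))\le\int^t_0\|\dot\sigma\|\le t^{1/q}E^{1/p}$ for every $s\in[0,t]$, the right-hand side is at most $Lt^{1/q}E^{1/p}+t\,\omega(t^{1/q}E^{1/p})+1\le aE^{1/p}+b$, with $a,b$ depending only on $t,L,p,C_0$. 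Because $p>1$, the map $E\mapsto\tfrac1pE-aE^{1/p}$ tends to $+\infty$, so this inequality forces $E\le M$ for a constant $M=M(t,L,p,C_0)$ that is independent of $\nu$ (and of $\epsilon\in(0,1]$).

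\emph{Splicing.} Now let $\mu,\nu\in\cMp$, let $\epsilon\in(0,1]$ with $\sigma$ as above for $\cU(\nu,t)$, and let $\bar\sigma\in AC_p([0,1],\cMp)$ be a constant-speed geodesic from $\nu$ to $\mu$. For $\delta\in(0,1)$, the path $\tilde\sigma$ equal to the reparametrization $s\mapsto\sigma(s/(1-\delta))$ on $[0,(1-\delta)t]$ and to $s\mapsto\bar\sigma((s-(1-\delta)t)/(\delta t))$ on $[(1-\delta)t,t]$ is admissible for $\cU(\mu,t)$, and the change of variables from the previous proof, together with the near-optimality of $\sigma$, yields
$$
\cU(\mu,t)-\cU(\nu,t)\le\epsilon+\Big(\tfrac{1}{(1-\delta)^{p-1}}-1\Big)\tfrac Ep+\delta\!\int^t_0\!\cV(\sigma)\,ds+\tfrac{W_p(\mu,\nu)^p}{p(\delta t)^{p-1}}-\delta t\!\int^1_0\!\cV(\bar\sigma)\,ds .
$$
Keeping the two $\cV$-integrals relative to $\cV(\nu)$: since $W_p(\sigma(s),\nu)\le t^{1/q}E^{1/p}\le t^{1/q}M^{1/p}$ and $W_p(\bar\sigma(s),\nu)\le W_p(\mu,\nu)$, we have $\delta\int^t_0\cV(\sigma)\le\delta t\cV(\nu)+\delta t\,\omega(t^{1/q}M^{1/p})$ and $-\delta t\int^1_0\cV(\bar\sigma)\le-\delta t\cV(\nu)+\delta t\,\omega(W_p(\mu,\nu))$, so the $\cV(\nu)$ contributions cancel; with $E\le M$ this gives
$$
\cU(\mu,t)-\cU(\nu,t)\le\epsilon+\Big(\tfrac{1}{(1-\delta)^{p-1}}-1\Big)\tfrac Mp+\delta t\,\omega\!\big(t^{1/q}M^{1/p}\big)+\delta t\,\omega\!\big(W_p(\mu,\nu)\big)+\tfrac{W_p(\mu,\nu)^p}{p(\delta t)^{p-1}} .
$$

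\emph{Conclusion.} Given $\epsilon'>0$, take $\epsilon:=\min(1,\epsilon'/5)$, then $\delta\in(0,1)$ small enough that the second and third terms are each $<\epsilon'/5$ (possible since $M$ is a fixed constant), and finally $r_0>0$ so small that $W_p(\mu,\nu)<r_0$ makes the last two terms each $<\epsilon'/5$ ($\delta$ now being fixed). Then $W_p(\mu,\nu)<r_0$ implies $\cU(\mu,t)-\cU(\nu,t)<\epsilon'$; running the same argument with $\mu$ and $\nu$ exchanged gives the reverse inequality, and $r_0$ depends on nothing but $\epsilon'$ and the fixed data $t,L,p,C_0,\omega$. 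Hence $\mu\mapsto\cU(\mu,t)$ is uniformly continuous. The one genuinely new and delicate step is the $\nu$-independent energy bound: it works precisely because comparing with the constant path makes $\cV$ enter only through differences $\cV(\sigma(s))-\cV(\nu)$, which uniform continuity controls by the path's own length, so the resulting dependence is at most linear, hence $o(E)$ against the kinetic term. Everything else is a rerun of the continuity argument, the only extra care being the cancellation of the $\cV(\nu)$ contributions from the tail of $\sigma$ and from the spliced geodesic.
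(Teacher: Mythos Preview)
Your proof is correct, but it follows a genuinely different route from the paper's.

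The paper does not reuse the geodesic-splicing construction at all. Instead, it first restricts to absolutely continuous $\mu^1,\mu^2$ so that an optimal transport map $\Upsilon$ exists, takes a near-optimal path $\sigma^2$ for $\cU(\mu^2,t)$, lifts it to a measure $\eta$ on $\R^d\times\Gamma_t$ via the probabilistic representation (Theorem~8.2.1 of \cite{AGS}), and then \emph{translates every characteristic curve} by the constant vector $\Upsilon(x)-x$ to produce a competitor path $\sigma^1$ for $\cU(\mu^1,t)$. This yields two key facts simultaneously: $\|\dot\sigma^1(s)\|\le\|\dot\sigma^2(s)\|$ for a.e.\ $s$ (so the kinetic terms drop out entirely) and $W_p(\sigma^1(s),\sigma^2(s))\le W_p(\mu^1,\mu^2)$ for every $s$. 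The result is an explicit modulus $\omega_\cG+t\,\omega_\cV$, after which a density argument handles general $\mu^1,\mu^2$.

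Your argument trades this structural trick for a more elementary but less sharp one: you splice a geodesic tail and compensate for the reparametrization cost via a $\nu$-independent energy bound $E\le M$, obtained by comparing with the constant path so that $\cV$ appears only in differences $\cV(\sigma(s))-\cV(\nu)$ controlled by $\omega$. The cancellation of the $\delta t\,\cV(\nu)$ contributions from the rescaled $\sigma$ and from the geodesic tail is the point where your bookkeeping has to be careful, and it is. What you gain is self-containedness: no appeal to optimal maps, no AGS representation, no density step. What you lose is the clean modulus; yours is implicit in the choice of $\delta$ and $r_0$ and depends on the auxiliary constant $M$, whereas the paper's $\omega_\cG+t\,\omega_\cV$ is exactly what one would hope for.
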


\begin{proof}
First, recall that $\cMp$ is a metric length space, and that any two points can be joined by a constant speed geodesic.
A direct result of these facts is that $\cV$ satisfies \eqref{Vgrowth} for $p=1$ and so $\mu\mapsto \cU(\mu,t)$ is continuous on $\cMp$ for each
$t\ge 0$. Next, assume $\omega_\cG$ and $\omega_\cV$ are moduli of continuity for ${\cal G}$ and ${\cal V}$, respectively.
Let us also initially suppose $\mu_1, \mu_2\in \cMp$ are absolutely continuous with respect to Lebesgue measure; it follows that there is a Borel
map $\Upsilon:\R^d\rightarrow \R^d$ for which $\Upsilon_\#\mu^2=\mu^1,$  $\Upsilon^{-1}_\#\mu^1=\mu^2$ and
$$W^p_p(\mu^1,\mu^2)= \int_{\R^d}|x-\Upsilon(x)|^pd\mu^2(x) = \int_{\R^d}|\Upsilon^{-1}(y)-y|^pd\mu^1(y).
$$
\par For a given $\epsilon >0,$ choose an admissible path $\sigma^2$ for $\cU(\mu_2,t)$ such that
$$
\cU(\mu^2,t)\geq - \epsilon +  {\cal G}(\sigma^2(0))+\int^{t}_0\left(\frac{1}{p}||\dot{\sigma}^2(\tau)||^p -{\cal V}(\sigma^2(\tau))\right)d\tau,
$$
and let $v:\mathbb{R}^d\times[0,t]\rightarrow \mathbb{R}^d$ be a velocity field for $\sigma^2$ satisfying
$$||v(s) ||_{L^p(\sigma^2(s))}=||\dot{\sigma}^2(s)|| .
$$
According to Theorem 8.2.1 of \cite{AGS}, there is a Borel probability measure $\eta$ on $\R^d\times \Gamma_t$ such that
$$
\sigma^2(s)=e(s)_{\#}\eta, \quad s\in [0,t].
$$
Here $\Gamma_t:=C([0,t]; \R^d)$ equipped with the supremum norm, and
\begin{equation}\label{EvalMaps}
e(s):\R^d\times \Gamma_t\rightarrow \R^d; (x,\gamma)\mapsto \gamma(s)
\end{equation}
(for $s\in [0,t]$).
Moreover, $\eta$ is concentrated on pairs $(x,\gamma)$ for which $\gamma$ is a solution of the ODE
\begin{equation}\label{gammadotv}
\dot{\gamma}(s)=v(\gamma(s),s), \quad \text{a.e.}\; s\in(0,t)
\end{equation}
satisfying $\gamma(t)=x$.

\par Define a map $\tilde{S} : \mathbb{R}^d\times \Gamma_t \rightarrow \mathbb{R}^d\times \Gamma_t$ by
$$\tilde{S}:(x,\gamma)\rightarrow (\Upsilon(x),\gamma +\Upsilon(x)-x),
$$
a Borel probability measure $\tilde{\mathbf{ \eta}}$ on $\mathbb{R}^d\times \Gamma_t$
$$
\tilde{\mathbf{ \eta}}:= \tilde{S}_{\#}\mathbf{ \eta},
$$
and a path
$$\sigma^1(s) = (e(s))_{\#}\tilde{\mathbf{ \eta}}, \quad s\in [0,t].
$$
Note $\sigma^1(0)=\mu^1$ and
\begin{align*}
 W^p_p(\sigma^1(s_1),\sigma^1(s_2))&\leq \int_{\R^d\times\Gamma_t}|\gamma(s_1)-\gamma(s_2)|^pd\tilde{\mathbf{ \eta}}(x,\gamma)\\
&=\int_{\R^d\times\Gamma_t}|\left (\gamma(s_1)+\Upsilon(x)-x\right )-\left(\gamma(s_2)+\Upsilon(x)-x\right)|^pd{\mathbf{ \eta}}(x,\gamma)\\
&\leq \int_{\R^d\times\Gamma_t}\left|\int_{s_1}^{s_2}\dot{\gamma}(s)ds\right|^pd{\mathbf{ \eta}}(x,\gamma)\\
&\leq (s_2-s_1)^{\frac{p}{q}}\int_{\R^d\times\Gamma_t}\int_{s_1}^{s_2}|\dot{\gamma}(s)|^pdsd{\mathbf{ \eta}}(x,\gamma)\\
&=(s_2-s_1)^{\frac{p}{q}}\int_{s_1}^{s_2}\int_{\R^d\times\Gamma_t}|v(\gamma(s),s)|^p d{\mathbf{ \eta}}(x,\gamma)ds\\
&=(s_2-s_1)^{\frac{p}{q}}\int_{s_1}^{s_2}||v(s)||^p_{L^p(\sigma^2(s))}ds.
\end{align*}
Thus, $\sigma^1$ is an admissible path for $\cU(\mu^1,t).$ Also notice
\begin{align*}
 \frac{W^p_p(\sigma^1(s_1),\sigma^1(s_2))}{|s_2-s_1|^p}\leq \frac {\int_{s_1}^{s_2}||v(s)||^p_{L^p(\sigma^2(s))}ds}{|s_2-s_1|}.
\end{align*}
As a result
$$||\dot\sigma^1(s)||\le ||\dot\sigma^2(s)||$$
for Lebesgue almost every $s\in [0,t]$.

\par Next
\begin{align*}
 W_p^p(\sigma^1(s),\sigma^2(s))&= W_p^p((e(s))_{\#}\tilde{\mathbf{ \eta}},(e(s))_{\#}{\mathbf{ \eta}})\\
&= W_p^p((e(s))_{\#}(\tilde{S}_{\#}{\mathbf{ \eta}}),(e(s))_{\#}{\mathbf{ \eta}})\\
&\leq\int_{\R^d\times\Gamma_t}|(\gamma(s)+\Upsilon(x)-x) -\gamma(s)   |^pd{\mathbf{ \eta}}(x,\gamma)\\
&=\int_{\R^d}|\Upsilon(x)-x|^pd \mu^2(x)\\
&=W_p^p(\mu^1,\mu^2).
\end{align*}
As a result,
\begin{align*}
\cU(\mu^1,t)-\cU(\mu^2,t) &\le \epsilon +  {\cal G}(\sigma^1(0))- {\cal G}(\sigma^2(0)) \\
& + \int^{t}_0\left(\frac{1}{p}||\dot{\sigma}^1(\tau)||^p - \frac{1}{p}||\dot{\sigma}^2(\tau)||^p\right)d\tau -\int^t_0\left({\cal V}(\sigma^1(\tau))-{\cal V}(\sigma^2(\tau))\right)d\tau\\
&\le \omega_\cG(W_p(\sigma^1(0),\sigma^2(0)) +\int^t_0\omega_\cV(W_p(\sigma^1(\tau),\sigma^2(\tau))d\tau +\epsilon\\
&\le \omega_\cG(W_p(\mu^1,\mu^2))+t\omega_\cV(W_p(\mu^1,\mu^2))+\epsilon\\
&=(\omega_\cG +t \omega_\cV)(W_p(\mu^1,\mu^2)) +\epsilon.
\end{align*}
\par Exchanging $\mu^1$ and $\mu^2$ and repeating the argument above gives
\begin{equation}\label{UnifContU}
|\cU(\mu^1,t)-\cU(\mu^2,t) |\le (\omega_\cG +t \omega_\cV)(W_p(\mu^1,\mu^2)) +\epsilon.
\end{equation}
The general assertion, when $\mu^1,\mu^2\in \cMp$ are not necessarily absolutely continuous, follows from the density of absolutely continuous measures in $\cMp$ (Lemma 7.1.10 in \cite{AGS}) and the continuity of $\cU$.
\end{proof}

\begin{cor}\label{LipU}
If ${\cal V}$ is Lipschitz continuous and bounded, then $\cU$ is jointly Lipschitz continuous on $\cMp\times [0,T]$ for each $T>0$.
\end{cor}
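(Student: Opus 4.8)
The plan is to establish, separately, a spatial Lipschitz bound that is uniform in $t\in[0,T]$ and a temporal Lipschitz bound that is uniform in $\mu$, and then combine them by the triangle inequality. First I would record a preliminary reduction: because $\cV$ is bounded, \eqref{Vgrowth} holds with $\alpha=0$ (and $\beta=\sup\cV$), so the smallness condition \eqref{TChoice} is satisfied for every threshold and all the earlier results — Corollary \ref{UFinite}, the joint-continuity and uniform-continuity propositions, and the dynamic programming principle \eqref{DPP} — are available on $\cMp\times[0,\infty)$. Set $M:=\sup_{\cMp}|\cV|$ and fix $T>0$.

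\emph{Spatial part.} Since $\cG$ and $\cV$ are Lipschitz, I would take $\omega_\cG(r)=Lr$ and $\omega_\cV(r)=\mathrm{Lip}(\cV)\,r$ in the already-established estimate \eqref{UnifContU}, and let $\epsilon\to0^+$, to get
\[
|\cU(\mu^1,t)-\cU(\mu^2,t)|\le\bigl(L+t\,\mathrm{Lip}(\cV)\bigr)W_p(\mu^1,\mu^2)\le K\,W_p(\mu^1,\mu^2),\qquad t\in[0,T],
\]
with $K:=L+T\,\mathrm{Lip}(\cV)$ independent of $t$.

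\emph{Temporal part.} Fix $\mu$ and $0\le s\le t\le T$. Testing \eqref{DPP} with the constant path $\sigma\equiv\mu$ gives the easy direction $\cU(\mu,t)\le\cU(\mu,s)-(t-s)\cV(\mu)\le\cU(\mu,s)+M(t-s)$. For the reverse inequality I would pick an $\epsilon$-optimal admissible path $\sigma$ for $\cU(\mu,t)$ on $[s,t]$ in \eqref{DPP}; then, using the spatial bound just proved together with $W_p(\sigma(s),\mu)=W_p(\sigma(s),\sigma(t))\le\int_s^t\|\dot\sigma(r)\|\,dr$ and $\cV\le M$,
\[
\cU(\mu,t)+\epsilon\ge\cU(\mu,s)-K\!\int_s^t\!\|\dot\sigma(r)\|\,dr+\frac1p\!\int_s^t\!\|\dot\sigma(r)\|^p\,dr-M(t-s),
\]
and Young's inequality $\frac1p a^p-Ka\ge-\frac1q K^q$ ($a\ge0$) bounds the two integral terms below by $-\frac1q K^q(t-s)$. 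Sending $\epsilon\to0^+$ yields $|\cU(\mu,t)-\cU(\mu,s)|\le C_T|t-s|$ with $C_T:=\frac1q K^q+M$, uniformly in $\mu$.

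Combining the two estimates through the intermediate point $(\mu^2,t_1)$ gives $|\cU(\mu^1,t_1)-\cU(\mu^2,t_2)|\le K\,W_p(\mu^1,\mu^2)+C_T|t_1-t_2|$ on $\cMp\times[0,T]$, which is the assertion. I expect the only real obstacle to be the lower bound on $\cU(\mu,t)-\cU(\mu,s)$ in the temporal part: a near-optimal path for $\cU(\mu,t)$ need not stay near $\mu$, so the displacement $K\,W_p(\sigma(s),\mu)$ incurred in passing from $\cU(\sigma(s),s)$ back to $\cU(\mu,s)$ must be absorbed by the kinetic cost $\frac1p\int_s^t\|\dot\sigma\|^p$, and the Young-inequality step is precisely what makes this work; everything else is routine given the dynamic programming principle and the estimate \eqref{UnifContU}.
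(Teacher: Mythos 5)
Your proof is correct and follows essentially the same route as the paper's: the spatial bound is taken directly from \eqref{UnifContU}, and the temporal bound is obtained from the dynamic programming principle \eqref{DPP} by trading the displacement penalty against the kinetic term. The only stylistic difference is in the last step: the paper first passes from $\frac1p\int_s^t\|\dot\sigma\|^p\,dr$ to $\frac{W_p(\sigma(s),\mu)^p}{p(t-s)^{p-1}}$ via Jensen and then evaluates $\inf_{z\ge0}\{-L_1z+z^p/p\}(t-s)$, whereas you apply Young's inequality pointwise to the integrand $\frac1p\|\dot\sigma(r)\|^p-K\|\dot\sigma(r)\|$ and integrate; both give exactly $-\frac{1}{q}L_1^q(t-s)$. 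Your opening remark that boundedness of $\cV$ lets \eqref{Vgrowth} hold with $\alpha=0$, so that \eqref{TChoice} imposes no restriction and $T$ may be taken arbitrary, is a useful clarification that the paper leaves implicit.
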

\begin{proof}
Fix $T>0$ and define $L_1:=\text{Lip}(\cG)+ T\text{Lip}(\cV)$. Inequality \eqref{UnifContU} above states
$$
|\cU(\mu^1,t)-\cU(\mu^2,t)|\le L_1 W_p(\mu^1,\mu^2)
$$
for $\mu^1,\mu^2\in \cMp$ and $t\in [0,T]$.  Using the dynamic programming identity \eqref{DPP}, we find for $0\le s\le t\le T$
\begin{align*}
\cU(\mu, t) & = \inf\left\{\cU(\sigma(s),s)+\int^t_s\left(\frac{1}{p}||\dot{\sigma}(r)||^p -{\cal V}(\sigma(r))\right)dr: \sigma\in AC_p([s,t], M_p),\; \sigma(t)=\mu\right\}\\
& \ge \cU(\mu, s) + \inf\left\{-L_1W_2(\sigma(s),\mu)+\int^t_s\left(\frac{1}{p}||\dot{\sigma}(r)||^p -\sup{\cal V}\right)dr:\; \sigma(t)=\mu\right\}\\
& \ge \cU(\mu, s) -(\sup{\cal V})(t-s)+\inf\left\{-L_1W_p(\sigma(s),\mu)+\frac{W_p(\sigma(s),\mu)^p}{p(t-s)^{p-1}}:\; \sigma(t)=\mu\right\}\\
& \ge \cU(\mu, s) -(\sup{\cal V})(t-s) + \inf_{z\ge0}\left\{-L_1z+\frac{z^p}{p}\right\}(t-s)\\
& \ge \cU(\mu, s) -\left[ \sup{\cal V}- \inf_{z\ge0}\left\{-L_1z+\frac{z^p}{p}\right\}\right](t-s).
\end{align*}
Dynamic programming also implies, by choosing $\sigma(r)=\mu, \; s\le r\le t$,
$$
\cU(\mu,t)\le \cU(\mu,s) - (t-s){\cal V}(\mu)\le \cU(\mu,s) + (-\inf{\cal V})(t-s).
$$
Therefore,
$$
|\cU(\mu,t)-\cU(\mu,s)|\le \max\left\{\left[ \sup{\cal V}-\inf_{z\ge0}\left\{-L_1z+\frac{z^p}{p}\right\}\right], -\inf{\cal V}\right\}(t-s).
$$
\end{proof}

\section{Examples}\label{examples}
In this section, we present some important examples and prove Theorem \ref{thm2} along the way.  In our estimate, these are the simplest examples of
generalized action functions that can be represented by formulae alternative to \eqref{GenAction}. The first example is a generalization of the classical Hopf-Lax type formula
which have also been studied in various metric spaces. The other types of examples involve what we believe is a new formula that is clearly specific to spaces
of measures. This formula will prove to be useful as it will help us to verify the existence of
minimizing paths for the corresponding action, when it is unclear if any sort of direct methods apply.

\begin{ex}
When ${\cal V}\equiv 0$, $\cU$ defined in \eqref{GenAction} is also given by
$$
\cU(\mu,t)=\inf_{\tau\in M_p}\left\{{\cal G}(\tau) + \frac{W_p(\mu,\tau)^p}{pt^{p-1}}\right\}, \quad (\mu,t)\in \cMp\times (0,\infty).
$$
This identity is well known and this type of Hopf-Lax formula holds in vast generality \cite{AF, Drag, Gos, V}. Moreover, the modified value function
\begin{equation}\label{ModValue}
\cU(\mu,t)=\inf\left\{\cG(\sigma(0)) + \int^t_0\ell(||\dot\sigma(s)||)ds: \sigma\in AC_p([0,t], {\cal M}_p),\; \sigma(t)=\mu\right\}
\end{equation}
also can be expressed as
\begin{equation}\label{GenHopfLax}
\cU(\mu,t)=\inf_{\tau\in {\cal M}_p}\left\{\cG(\tau) + t\ell\left(\frac{W_p(\mu,\tau)}{t}\right)\right\}, \quad (\mu,t)\in {\cal M}_p\times (0,\infty)
\end{equation}
provided $\ell:[0,\infty)\rightarrow [0,\infty)$ is increasing and convex.  This is a simple consequence of Jensen's inequality and the fact that
$\cMp$ is a length space.

\par A novelty of this work is that our methods can be used to establish, under mild assumptions, that $\cU$ \eqref{GenHopfLax} is a very natural type of solution of the PDE
\begin{equation}\label{LaxPDE}
\cU_t + \ell^*\left(||\nabla_\mu \cU||_{L^q(\mu)}\right)=0.
\end{equation}
Here $\ell^*$ is the Legendre transform of $\ell$; see Proposition \ref{LaxProp}. Along with the main result of this paper, we view this assertion as positive evidence
that there is a theory of deterministic control to be developed in the Wasserstein spaces.

\end{ex}
We now consider the case when $\cG$ and $\cV$ satisfy \eqref{narrowGV}.  We shall further assume for the remainder of this section that $g$ is Lipschitz and $V$ satisfies
\begin{equation}\label{classVassump}
|V(x)|\le a|x|^p+b, \quad x\in \R^d
\end{equation}
for some $a,b\in \R$.  It is easy to verify that $\cG$ satisfies \eqref{GLip} with $L\le\text{Lip}(g)$ and $\cV$ satisfies \eqref{Vgrowth} for any $\alpha>a$ and appropriate $\beta$ dependent on $a$ and $b$.  Thus, the results of the previous section holds with $T$ chosen to satisfy
\eqref{TChoice} with $\alpha=a$. We can also argue as we did in the previous section to conclude the classical action $u$ is continuous on $\R^d\times (0,T)$ for the same
choice of $T$.

\par As previously remarked, it is now well known that $u$ is a viscosity solution of the HJE \eqref{classicalHJE} on $\R^d\times (0,T)$. Another fact that we
shall make use of is that for each $(x,t)\in\R^d\times (0,T)$, $u(x,t)$ has a minimizing path.  This property of $u$ follows from our assumptions on $g$ and $V$ and well known compactness results \cite{FR, FS}.
We now aim  to verify Theorem \ref{thm2} which states the formula for the generalized value function turns out to be particularly simple if we assume \eqref{narrowGV}. In order to verify this result, we shall need a rather crucial lemma.

\begin{lem}
Fix $t\in (0,T)$, and for $x\in \R^d$ define the set valued map
$$
F(x):=\left\{\gamma\in AC_p\left([0,t]; \R^d\right): u(x,t)=g(\gamma(0)) +\int^t_0\left(\frac{1}{p}|\dot{\gamma}(s)|^p -V(\gamma(s))\right)ds,\; \gamma(t)=x\right\}.
$$
There is a Borel measurable mapping $\Phi: \R^d \rightarrow AC_p\left([0,t]; \R^d\right)$ such that $\Phi(x)\in F(x)$ for all $x\in \R^d$.
\end{lem}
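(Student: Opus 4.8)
The plan is to apply a measurable selection theorem — specifically the Kuratowski–Ryll-Nardzewski selection theorem — to the set-valued map $F$, after verifying that $F$ has closed (indeed compact) nonempty values and is measurable in the appropriate sense. The target space $AC_p([0,t];\R^d)$ will be viewed as a subset of the Polish space $C([0,t];\R^d)$ (with the sup norm), or alternatively of $W^{1,p}([0,t];\R^d)$; I will work in $C([0,t];\R^d)$, so that $F(x)\subset C([0,t];\R^d)$, and recover the $AC_p$ membership at the end from the fact that every element of $F(x)$ has finite action (hence $\dot\gamma\in L^p$).

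\textbf{Step 1: nonemptiness and compactness of the values.} Nonemptiness of $F(x)$ for each $x\in\R^d$ is exactly the existence of minimizers for the classical action $u(x,t)$, which was recalled just before the lemma statement (from the standing assumptions on $g$ and $V$ together with the classical compactness results of \cite{FR,FS}). For compactness: the same coercivity estimate that yields existence shows that any minimizing $\gamma$ satisfies a bound on $\int_0^t|\dot\gamma(s)|^p\,ds$ depending only on $x$, $t$, $g$, $V$; hence $F(x)$ is an equi-absolutely-continuous, equibounded family, so it is precompact in $C([0,t];\R^d)$ by Arzelà–Ascoli, and it is closed there because the action functional is lower semicontinuous with respect to uniform convergence (again the standard direct-method argument) while the constraint $\gamma(t)=x$ is closed. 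Thus $F(x)$ is a nonempty compact subset of $C([0,t];\R^d)$.

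\textbf{Step 2: measurability of $F$.} I need that $\{x : F(x)\cap U\neq\varnothing\}$ is Borel for every open $U\subset C([0,t];\R^d)$; by Kuratowski–Ryll-Nardzewski it suffices to know this for $U$ ranging over a countable base, and in fact it is cleanest to verify that the graph of $F$, $\{(x,\gamma): \gamma\in F(x)\}$, is a Borel subset of $\R^d\times C([0,t];\R^d)$, and then invoke a measurable-selection theorem for Borel graphs with $\sigma$-compact values (e.g. via Lusin–Novikov, or simply note that compact-valued Borel-graph maps admit Borel selections). The graph is described by the conditions $\gamma(t)=x$ (closed), $\gamma\in AC_p$ with the action integral finite and equal to $u(x,t)$. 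The function $(x,\gamma)\mapsto u(x,t)$ is continuous in $x$ (continuity of $u$ on $\R^d\times(0,T)$, already established) and does not depend on $\gamma$; the map $\gamma\mapsto g(\gamma(0))+\int_0^t(\frac1p|\dot\gamma|^p - V(\gamma))\,ds$, defined to be $+\infty$ off $AC_p$, is Borel on $C([0,t];\R^d)$ (it is lower semicontinuous, being a sup of continuous functionals via the standard relaxation of the $L^p$ norm of the derivative). Hence the graph is a Borel set, and the measurable selection theorem produces a Borel map $\Phi:\R^d\to C([0,t];\R^d)$ with $\Phi(x)\in F(x)$; since $F(x)\subset AC_p([0,t];\R^d)$ always, this $\Phi$ is the desired selection, and Borel measurability as a map into $AC_p([0,t];\R^d)$ (with its natural Borel structure inherited from $C$, or from $W^{1,p}$) follows since $AC_p$ is a Borel subset and the target topologies agree on the graph.

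\textbf{Main obstacle.} The genuinely delicate point is Step 2 — verifying measurability of $F$ rigorously, in particular that $\gamma\mapsto \int_0^t \frac1p|\dot\gamma(s)|^p\,ds$ is a Borel function on $C([0,t];\R^d)$ and that the graph is therefore Borel, so that a selection theorem applies. One must be careful about which topology on $AC_p$ (or on $C$) is used and about the interplay between lower semicontinuity of the action and the equality constraint $u(x,t)=\text{action}(\gamma)$; the cleanest route is to realize $F$ as a closed-graph, compact-valued multifunction into the Polish space $C([0,t];\R^d)$ and cite Kuratowski–Ryll-Nardzewski (or Castaing's representation theorem). Steps 1 (nonemptiness, compactness) is essentially a repackaging of the classical existence proof and should be routine given the results already invoked in the text.
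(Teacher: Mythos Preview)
Your proposal is correct and follows the same overall strategy as the paper --- apply a measurable selection theorem after establishing that $F$ has nonempty closed values and is measurable --- but the implementation differs in two respects worth noting.

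First, the ambient space: you embed $F(x)$ in the Polish space $C([0,t];\R^d)$ with the sup norm, whereas the paper works directly in the Banach space $AC_p([0,t];\R^d)$ equipped with the norm $\|\gamma\|=\max_{s}|\gamma(s)|+(\int_0^t|\dot\gamma|^p)^{1/p}$. Second, the selection criterion: you verify measurability of $F$ by showing its graph is Borel (via lower semicontinuity of the action on $C$) and then invoke Kuratowski--Ryll-Nardzewski or a Borel-graph/compact-values selection theorem. The paper instead cites Theorem 8.3.1 of Aubin--Frankowska \cite{AB}, whose hypothesis is that $x\mapsto\mathrm{dist}(\eta,F(x))$ be Borel for each fixed $\eta\in AC_p$; the paper verifies this by showing the distance function is in fact \emph{lower semicontinuous} in $x$, via a direct compactness argument (take $x_n\to x$, pick nearest points $\gamma_j\in F(x_{n_j})$, extract a limit using the uniform $L^p$ bound on $\dot\gamma_j$, and use weak lower semicontinuity of the $AC_p$ norm).

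Both routes rest on the same analytic ingredients --- the coercivity bound on $\int|\dot\gamma|^p$ for minimizers and lower semicontinuity of the action under uniform (plus weak-$L^p$) convergence --- so neither is materially shorter. Your approach has the advantage of working in the more familiar space $C$ and packaging the measurability check as a single Borel-graph verification; the paper's approach avoids any appeal to descriptive set theory beyond the quoted theorem and makes the stability of minimizers under perturbation of the endpoint $x$ explicit, which is arguably more informative. Your remark that the equality constraint $u(x,t)=\text{action}(\gamma)$ reduces (given $\gamma(t)=x$) to the one-sided inequality $\text{action}(\gamma)\le u(x,t)$, hence to a sublevel set of an lsc function, is the right observation to make your graph argument clean.
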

\begin{proof}
Consider the Banach space $AC_p([0,t], \R^d)\subset C([0,t]; \R^d)$ with the norm
$$
||\gamma||=\max_{0\le s\le t}|\gamma(s)| + \left(\int^t_0|\dot\gamma(s)|^pds\right)^{1/p}.
$$
We employ Theorem 8.3.1 of \cite{AB}, which establishes that if $F$ has closed, nonempty images, $F$ has a Borel measurable selection provided the function
$$
x\mapsto \text{dist}(\eta,F(x))
$$
is Borel measurable for each $\eta\in AC_p([0,t], \R^d)$.  We show this function is in fact lower-semicontinuous.

\par Let $x\in \R^d$ and $\{x_n\}_{n\in\N}$ tending to $x$ as $n\rightarrow \infty$. Furthermore, choose a subsequence $\{x_{n_j}\}_{j\in \N}$ so that
$$
\liminf_{n\rightarrow \infty}\text{dist}(\eta,F(x_n))=\lim_{j\rightarrow \infty}\text{dist}(\eta,F(x_{n_j})).
$$
Let $\gamma_j\in F(x_{n_{j}})$ be such that $\text{dist}(\eta,F(x_{n_j}))=||\eta-\gamma_j||$; that such a $\gamma_j$ exists follows from a routine compactness argument 
(similar to the one given in this proof below). Also recall that
$$
u(x_{n_{j}},t)=g(\gamma_j(0))+\int^t_0\left(\frac{1}{p}|\dot{\gamma_j}(s)|^p -V(\gamma_j(s))\right)ds.
$$
In view of our assumptions on $g$ and $V$, and as $u(x_{n_{j}},t)\le g(x_{n_{j}})-t V(x_{n_{j}})$ is bounded from above, standard manipulations imply
$$
\sup_{j\in \N}\int^t_0|\dot{\gamma_j}(s)|^pds<\infty.
$$
\par It follows that $\gamma_j$ is  equicontinuous and uniformly bounded since
$$
|\gamma_j(s)|\le |\gamma_j(s)-\gamma_j(t)|+|\gamma_j(t)|\le\left(\int^t_s|\dot\gamma(\tau)|^pd\tau\right)^{1/p}|t-s|^{1-1/p}+|x_j|\le C|t-s|^{1-1/p}+|x_j|
$$
for all $j\in \N$ and $s\in [0,t]$. Consequently, there is a subsequence $\{\gamma_{j_k}\}$ that converges uniformly to
some $\gamma\in AC_p([0,t])$ and $\dot\gamma_{j_k}$ converges to $\dot\gamma$ weakly in $L^p([0,t])$, as $k\rightarrow \infty$. Moreover,
it is immediate from this convergence that $\gamma\in F(x)$.
\par Finally, notice that
\begin{align*}
\text{dist}(\eta,F(x))& \le ||\eta -\gamma|| \\
& = \max_{0\le s\le t}|\eta(s)-\gamma(s)| + \left(\int^t_0|\dot\eta(s)-\dot\gamma(s)|^pds\right)^{1/p}\\
& \le  \liminf_{k}\left(\max_{0\le s\le t}|\eta(s)-\gamma_{j_k}(s)| + \left(\int^t_0|\dot\eta(s)-\dot\gamma_{j_k}(s)|^pds\right)^{1/p}\right)\\
&= \liminf_{k}\text{dist}(\eta,F(x_{n_{j_k}}))\\
&= \lim_{j}\text{dist}(\eta,F(x_{n_{j}}))\\
&= \liminf_{n}\text{dist}(\eta,F(x_{n})).
\end{align*}
\end{proof}

Recall that if $\Phi(x)=\gamma$ at time $t>0$, and $u$ is differentiable at each $(\gamma(s),s)$, then $\gamma$ satisfies \eqref{ClassOptEq}
$$
|\dot\gamma(s)|^{p-2}\dot\gamma(s)=\nabla u(\gamma(s),s), \quad s\in (0,t).
$$
Again we denote $\Gamma_t=C([0,t]; \R^d)$ (equipped with the supremum norm) and notice the family of evaluation maps $e(s)$ defined in \eqref{EvalMaps} are continuous. As a result, the family of
composition mappings of $\R^d$
\begin{equation}\label{FlowMap}
\Psi(s): = e(s)\circ (\text{id}_{\R^d},\Phi)
\end{equation}
are also Borel measurable. We interpret this family of maps $\{\Psi(s)\}_{s\in[0,t]}$  as a flow of minimizing trajectories since for each $x\in \R^d$, $s\mapsto \Psi(x,s)$ is a solution of the above ODE.
We now use this flow map to furnish a proof of Theorem \eqref{thm2}.

\begin{proof} (of Theorem \eqref{thm2}) $(i)$ Let $\sigma$ be an admissible path for $\cU(\mu,t)$ and $v:\R^d\times[0,t]\rightarrow \R^d$ a velocity field for $\sigma$ satisfying \eqref{MetricDer}. By Theorem 8.2.1 of \cite{AGS}, there is a measure $\eta$ on $\R^d\times \Gamma_t$ such that $\sigma(s)=e(s)_{\#}\eta.$ Moreover, $\eta$ is concentrated on pairs $(x,\gamma)$ for which $\gamma$ is a solution of the ODE \eqref{gammadotv} satisfying $\gamma(t)=x$.  By Tonelli's theorem,
\begin{align*}
\int_{\R^d\times\Gamma_t}\left\{\int^t_0|\dot\gamma(s)|^pds\right\}d\eta(x,\gamma) &= \int^t_0\left\{\int_{\R^d\times\Gamma_t}|\dot\gamma(s)|^pd\eta(x,\gamma)\right\}ds \\
& = \int^t_0\left\{\int_{\R^d\times\Gamma_t}|v(\gamma(s),s)|^pd\eta(x,\gamma)\right\}ds\\
& = \int^t_0\left\{\int_{\R^d}|v(x,s)|^pd\sigma_s(x)\right\}ds\\
& = \int^t_0||\dot\sigma(s)||^pds<\infty.
\end{align*}
Consequently, $\eta$ is actually concentrated on a subset of $AC_p([0,t],\R^d)\subset \Gamma_t$.

\par Observe
\begin{align}\label{ImportantVequalDu}
{\cal G}(\sigma(0))+\int^t_0\left(\frac{1}{p}||\dot{\sigma}(s)||^p -{\cal V}(\sigma(s))\right)ds \nonumber
&= \int_{\R^d}g(x)d\sigma_0(x) +\int^t_0\int_{\R^d}\left(\frac{1}{p}|v(x,s)|^p -V(x)\right)d\sigma_s(x)ds \nonumber \\
&= \int_{\R^d\times \Gamma_t}g(\gamma(0))d\eta(x,\gamma)\nonumber \\
& \quad + \int^t_0\int_{\R^d\times\Gamma_t}\left(\frac{1}{p}|v(\gamma(s),s)|^p -V(\gamma(s))\right)d\eta(x,\gamma)ds\nonumber \\
&= \int_{\R^d\times \Gamma_t}g(\gamma(0))d\eta(x,\gamma)\nonumber \\
& \quad + \int^t_0\int_{\R^d\times\Gamma_t}\left(\frac{1}{p}|\dot\gamma(s)|^p -V(\gamma(s))\right)d\eta(x,\gamma)ds\nonumber \\&= \int_{\R^d\times\Gamma_t}\left\{g(\gamma(0))+\int^t_0\left(\frac{1}{p}|\dot{\gamma}(s)|^p -V(\gamma(s))\right)ds\right\}d\eta(x,\gamma)\nonumber \\
&\ge \int_{\R^d\times\Gamma_t}u(x,t)d\eta(x,\gamma)\\
&= \int_{\R^d}u(x,t)d\mu(x).\nonumber
\end{align}
The interchange of order of integration follows from the assumption \eqref{classVassump} and a routine application of Fubini's theorem. Thus, $\cU(\mu,t)\ge \int_{\R^d}u(x,t)d\mu(x).$

\par We now pursue the opposite inequality. We appeal to the lemma above to obtain the
family of Borel measurable mappings $\{\Psi(s)\}_{s\in[0, t]}$
\eqref{FlowMap} and also define the family of Borel probability
measures
$$
\sigma(s):=\Psi(s)_{\#}\mu, \quad s\in [0,t].
$$
Since $s\mapsto \Psi(x,s)$ is a minimizer for $u(x,t)$, the assumption \eqref{classVassump} implies
$$
\int_{\R^d}\int^t_0|\partial_s\Psi(x,s)|^pds d\mu(x)<\infty
$$
for each $\mu\in \cMp$; a proof of this bound follows closely with the proof of Corollary \ref{UFinite}.  For $0\le s_1< s_2<t$,
\begin{align*}
W_p(\sigma(s_1),\sigma(s_2))^p & \le \int_{\R^d}|\Psi(s_1,x)-\Psi(s_2,x)|^pd\mu(x) \\
& \le (s_2-s_1)^{p-1}\int_{\R^d}\int^{s_2}_{s_1}|\partial_s\Psi(\tau,x)|^pd\tau d\mu(x)
\end{align*}
which leads to
$$
\left(\frac{W_p(\sigma(s_2),\sigma(s_1))}{s_2-s_1}\right)^p\le \frac{1}{s_2-s_1}\int^{s_2}_{s_1} \left(\int_{\R^d}|\partial_s\Psi(\tau,x)|^pd\mu(x)\right)d\tau.
$$
It now follows that
$$
||\dot\sigma(s)||^p\le \int_{\R^d}|\partial_s\Psi(x,s)|^pd\mu(x)
$$
for Lebesgue a.e. $s\in [0,t]$.  As a result
\begin{align*}
\cU(\mu,t)&\le {\cal G}(\sigma(0))+\int^t_0\left(\frac{1}{p}||\dot{\sigma}(s)||^p -{\cal V}(\sigma(s))\right)ds \\
&= \int_{\R^d}g(\Psi(x,0))d\mu(x)+ \int^t_0\left(\frac{1}{p}||\dot{\sigma}(s)||^p -\int_{\R^d}V(\Psi(x,s))d\mu(x)\right)ds\\
&\le \int_{\R^d}\left\{g(\Psi(x,0)) +\int^t_0\left(\frac{1}{p}|\partial_s\Psi(x,s)|^p -V(\Psi(x,s))\right)ds\right\}d\mu(x)\\
&=\int_{\R^d}u(x,t)d\mu(x).
\end{align*}
$(ii)$ From the above calculations, it is clear that for each $(\mu,t)\in \cMp\times (0,T)$, the path $s\mapsto \Psi(s)_{\#}\mu$ is optimal for $\cU(\mu,t)$.
\end{proof}

\begin{ex}\label{Power2Ex}
For $p=2$, $g\equiv 0$ and $V(x)=\frac{1}{2}|x|^2$, the classical action is given  by $u(x,t)=-\tan(t)|x|^2/2$ for
$t\in (0,\pi/2)$, and so
$$
\cU(\mu,t)=-\tan(t)\int_{\R^d}\frac{|x|^2}{2}d\mu(x).
$$
In this case, the flow map is also explicit
$$
\Psi(x,s)=\frac{\cos(s)}{\cos(t)}x, \quad (x,s)\in \R^d\times [0,t].
$$
\end{ex}

\begin{ex}\label{PowerpEx}
Continuing from our example above, we now assume $p\in (1,\infty)$, $g\equiv 0$ and $V(x)=\frac{1}{p}|x|^p$. A good exercise is to
show that the classical value function is given  by
$$
u(x,t)=a(t)\frac{|x|^p}{p}, \quad \R^d\times(0,T_p).
$$
Here $a\in C^\infty(0,T_p)\cap C[0,T_p)$ is a negative solution of the initial value problem
$$
\begin{cases}
\dot{a}(t)+(p-1)|a(t)|^q+1=0, \quad 0<t<T_p\\
a(0)=0
\end{cases}
$$
with
$$
T_p:=\frac{\pi/q}{(p-1)^{1/q}\sin(\pi/q)}.
$$
We remark that it also follows from the construction of this solution that $\lim_{t\uparrow T_p}a(t)=-\infty$ and the flow map is given by
$$
\Psi(x,s)=\exp\left(\int^t_s |a(\tau)|^{\frac{1}{p-1}}d\tau\right) x, \quad (x,s)\in \R^d\times [0,t].
$$
Theorem \ref{thm2} then states the associated generalized value function $\cU(\mu,t)$ is obtained from integrating $x\mapsto u(x,t)$ against $\mu$.
\end{ex}

\section{Minimizing trajectories}\label{MinPaths}
We now consider the question of whether or not minimizing paths exist for generalized value functions.  In this section,
we will show that if $\cG$ and $\cV$ are continuous with respect to the narrow topology, then direct methods can be employed to verify the existence of minimizing paths.
We shall also provide a statement asserting precisely how the gradient of solutions of \eqref{classicalHJE} solve the Euler-Poisson equations when $\cG$ and $\cV$ satisfy
\eqref{narrowGV}.

\par We begin our study with a compactness result.  Recall that narrow convergence of Borel probability measures on $\R^d$ is completely metrizable (see Chapter 6 of \cite{Bill}).  As a particular metric on this space, we take the L\'{e}vy-Prokhorov metric
$$
d(\mu,\nu):=\inf\left\{\epsilon>0: \mu(A)\le \nu(A^\epsilon)+\epsilon, \; \nu(A)\le \mu(A^\epsilon)+\epsilon, \; \text{for all Borel}\; A\subset\R^d\right\}.
$$
Here $X_\epsilon:=\cup_{z\in X}B_{\epsilon}(z)$. The reason for this choice is due to the following inequality
\begin{equation}\label{deltaIneq}
d^2\le W_1
\end{equation}
(Corollary 2.18 of \cite{Huber}).

\begin{prop}
Let $p\in (1,\infty)$, $\mu\in \cMp$, and a sequence $\{\sigma^k\}_{k\in \N}\subset AC_p([0,t], \cMp)$, such that
$$
\sigma^k(t)=\mu, \quad k\in \N
$$
and
\begin{equation}\label{UnifEq}
\sup_{k\in \N}\int^t_0||\dot\sigma^k(s)||ds<\infty.
\end{equation}
Then there is a subsequence $\sigma^{k_j}$ and $\eta\in AC_p([0,t], \cMp)$ such that $\eta=\lim_{j\rightarrow\infty}\sigma^{k_j}$ in
$$
C([0,t],(\cPp,d)).
$$
Moreover
$$
\int^t_0||\dot\eta(s)||^pds\le \liminf_{j\rightarrow \infty}\int^t_0||\dot\sigma^{k_j}(s)||^pds.
$$
\end{prop}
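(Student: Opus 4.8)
The plan is to run an Arzel\`a--Ascoli / Helly-type compactness argument directly in the complete separable metric space $(\cPp,d)$, where $d$ is the L\'evy--Prokhorov metric, exploiting the elementary chain of inequalities $d^2\le W_1\le W_p$ (the first being \eqref{deltaIneq}, the second Jensen/H\"older), the definition of $AC_p$, and lower semicontinuity of $W_p$ under narrow convergence. The first step is a uniform bound: since $\sigma^k(t)=\mu$ and $W_p(\sigma^k(s),\mu)\le\int_s^t\|\dot\sigma^k(r)\|\,dr\le M:=\sup_k\int_0^t\|\dot\sigma^k\|\,dr<\infty$ by \eqref{UnifEq}, the triangle inequality for $W_p$ yields $\int_{\R^d}|x|^p\,d\sigma^k(s)\le\bigl(M+(\int_{\R^d}|x|^p\,d\mu)^{1/p}\bigr)^p$ for all $k\in\N$ and all $s\in[0,t]$. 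Thus the set $K:=\overline{\{\sigma^k(s):k\in\N,\ s\in[0,t]\}}$ has uniformly integrable $p$-th moments, hence is tight, hence compact in $(\cPp,d)$ by Prokhorov's theorem (recall narrow convergence on $\R^d$ is metrized by $d$, and by the moment bound together with Fatou any narrow limit of measures in $K$ again belongs to $\cMp$).

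Next I would pass to a subsequence, not relabeled, along which $\int_0^t\|\dot\sigma^k(s)\|^p\,ds$ converges to $L:=\liminf_k\int_0^t\|\dot\sigma^k(s)\|^p\,ds$; if $L=+\infty$ the asserted inequality is vacuous, so assume $L<\infty$. Then $\{\|\dot\sigma^k\|\}_k$ is bounded in $L^p([0,t])$, so along a further subsequence $\|\dot\sigma^{k_j}\|\rightharpoonup w$ weakly in $L^p([0,t])$ for some $w\ge 0$. Set $\ell(s):=\int_0^s w(r)\,dr$; this is nondecreasing and absolutely continuous, and $\ell^{k_j}(s):=\int_0^s\|\dot\sigma^{k_j}\|\,dr\to\ell(s)$ for every $s\in[0,t]$ (test the weak convergence against $\mathbf 1_{[0,s]}\in L^q$). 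Since the $\ell^{k_j}$ are nondecreasing and $\ell$ is continuous, this convergence is automatically uniform on $[0,t]$ (pointwise convergence of monotone functions to a continuous limit is uniform).

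To construct $\eta$, fix a countable dense $D\subset[0,t]$ with $t\in D$; by compactness of $K$ and a diagonal argument, along a further subsequence $\sigma^{k_j}(s)\to\eta(s)$ in $(\cPp,d)$ for each $s\in D$, with $\eta(t)=\mu$. For arbitrary $s\in[0,t]$, choosing $a,b\in D$ with $a\le s\le b$ and using $d(\sigma^{k_j}(s),\sigma^{k_j}(a))^2\le W_p(\sigma^{k_j}(s),\sigma^{k_j}(a))\le|\ell^{k_j}(s)-\ell^{k_j}(a)|$ (and similarly for $b$), together with the uniform convergence $\ell^{k_j}\to\ell$ and the continuity of $\ell$, one checks that $\{\sigma^{k_j}(s)\}_j$ is $d$-Cauchy; as $(\mathcal{P}(\R^d),d)$ is complete it converges to some $\eta(s)$, which lies in $\cMp$ by the moment bound and Fatou, so $\eta(s)\in K$. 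Running the same estimate with $a$ ranging over a finite $\delta$-net of $[0,t]$ inside $D$, $\delta$ chosen from the uniform continuity of $\ell$, upgrades pointwise to uniform convergence, i.e. $\sigma^{k_j}\to\eta$ in $C([0,t],(\cPp,d))$. Finally, for $s_1\le s_2$ I pass to the limit in $W_p(\sigma^{k_j}(s_1),\sigma^{k_j}(s_2))\le\int_{s_1}^{s_2}\|\dot\sigma^{k_j}\|\,dr$: the left side is lower semicontinuous under narrow convergence and the right side tends to $\int_{s_1}^{s_2}w\,dr$, giving $W_p(\eta(s_1),\eta(s_2))\le\int_{s_1}^{s_2}w(r)\,dr$. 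Hence $\eta\in AC_p([0,t],\cMp)$ with $\|\dot\eta\|\le w$ a.e., so $\int_0^t\|\dot\eta(s)\|^p\,ds\le\int_0^t w(s)^p\,ds\le\liminf_j\int_0^t\|\dot\sigma^{k_j}(s)\|^p\,ds$ by weak lower semicontinuity of the $L^p$-norm, which together with $L=\liminf_k\int_0^t\|\dot\sigma^k\|^p\,ds$ is the desired bound.

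The main obstacle is the mismatch between the hypothesis, which controls only the $L^1$ length $\int_0^t\|\dot\sigma^k\|\,ds$, and the conclusion, which asserts a limit lying in $C([0,t],(\cPp,d))$ and in $AC_p$: if one did not first pass to a subsequence with bounded $L^p$ energy, the limiting ``arc-length'' function $\ell$ could be discontinuous (the curves may traverse a fixed $W_p$-distance in vanishing time), the limit curve need not be continuous, and $\eta$ need not be absolutely continuous of exponent $p$. Extracting that subsequence and using weak $L^p$-compactness of the metric derivatives is the crux, since it simultaneously produces the continuous modulus $\ell$, forces the convergence to be uniform, and delivers the energy lower-semicontinuity; the remaining ingredients --- the moment estimate, Prokhorov's theorem, the diagonal extraction, completeness of $(\mathcal{P}(\R^d),d)$, and lower semicontinuity of $W_p$ under narrow convergence --- are routine.
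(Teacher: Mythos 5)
You correctly sensed the tension between the hypothesis as written and the conclusion: \eqref{UnifEq} is an $L^1$ bound on the metric derivatives, whereas equicontinuity in $C([0,t],(\cPp,d))$ and the final energy estimate require $L^p$ control. Your workaround --- passing to a subsequence on which $\int_0^t\|\dot\sigma^{k}\|^p\,ds$ converges to $L=\liminf$, and then discarding the case $L=+\infty$ --- does not fully close the gap: when $L=+\infty$ the ``moreover'' inequality is indeed vacuous, but the proposition still asserts the existence of a subsequence converging uniformly in $C([0,t],(\cPp,d))$ to some $\eta\in AC_p$, and that assertion is false under a bare $L^1$ bound. (Take $\sigma^k$ traversing a fixed positive $W_p$-distance during a time interval of length $1/k$ and constant elsewhere: the arc lengths are bounded, $\int_0^t\|\dot\sigma^k\|^p\,ds\sim k^{p-1}\to\infty$, and no subsequence can converge uniformly to a continuous curve.) The resolution is that \eqref{UnifEq} carries a typo and should read $\sup_k\int_0^t\|\dot\sigma^k(s)\|^p\,ds<\infty$; this is exactly the bound verified before the proposition is invoked in Corollary \ref{Existence}, and the paper's own proof asserts ``By \eqref{UnifEq}, the sequence of functions $s\mapsto\|\dot\eta^k(s)\|$ is bounded in $L^p(0,t)$,'' which only makes sense with the exponent $p$ present. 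You should state the corrected hypothesis explicitly rather than argue that part of the conclusion is vacuous.

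With the corrected hypothesis in hand, your proof is correct and runs along essentially the same Arzel\`{a}--Ascoli lines as the paper's: a moment bound plus Prokhorov for tightness, diagonal extraction over a dense set of times, uniform convergence in the L\'{e}vy--Prokhorov metric via $d^2\le W_1\le W_p$, completeness of $(\mathcal{P}(\R^d),d)$ together with the Fatou moment estimate to keep the limit in $\cMp$, and weak $L^p$ compactness of the metric derivatives together with lower semicontinuity of $W_p$ under narrow convergence for the energy estimate. The only stylistic divergence is in how you produce the modulus of equicontinuity: you extract the weak $L^p$ limit $w$ of $\|\dot\sigma^{k_j}\|$ first, form the arc-length $\ell(s)=\int_0^s w$, and use the fact that pointwise convergence of nondecreasing functions to a continuous limit is uniform, whereas the paper applies H\"{o}lder's inequality to the uniform $L^p$ bound to obtain a common H\"{o}lder-$1/q$ modulus on $W_p(\eta^k(\tau_1),\eta^k(\tau_2))$. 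Both deliver the same compactness; your route is slightly more roundabout but perfectly valid.
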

\begin{rem}
Similar results have been established. For instance, see Proposition 3.5 and Theorem 5.2 in a recent preprint of Gangbo and \'{S}wi\k{e}ch \cite{GS} and Proposition 4 in \cite{G2}.  The novelty of the
above proposition is that it asserts the {\it uniform} convergence of an appropriate subsequence in the narrow topology.
\end{rem}
\begin{proof}
We employ the Arzel\`{a}-Ascoli diagonalization argument with some modifications. Let $\{s_n\}_{n\in \N}\subset [0,t]$ be dense. Observe that
$$
W_p(\sigma^k(s_1),\mu)=W_p(\sigma^k(s_1),\sigma(t))\le \int^t_{s_1}||\sigma^k(s)||ds\le C.
$$
As $|x|\le |x-y| +|y|$,
$$
\int_{\R^d}|x|d\sigma^k_{s_1}(x)\le W_p(\sigma^k(s_1),\mu)+\int_{\R^d}|y|d\mu(y)\le C+\int_{\R^d}|y|d\mu(y)
$$
$\{\sigma^k(s_1)\}_{k\in \N}$ is tight, and by Prokhorov's theorem, has a narrowly convergent subsequence which we will denote
$$
\{\sigma^k_1(s_1)\}_{k\in \N}.
$$
Continuing inductively, we obtain narrowly convergent sequences $\{\sigma^k_n(s_n)\}_{k\in \N}$  where
$$
\{\sigma^k_{n+1}\}_{k\in \N}\subset \{\sigma^k_n\}_{k\in \N}\subset \{\sigma^k\}_{k\in \N}
$$
for each $n\in \N$. Defining $\eta^k:=\sigma^k_k$ for $k\in \N$, we have that by construction that $\eta^k(s_n)$ is narrowly convergent for all $n\in \N$.

\par Employing the inequality \eqref{deltaIneq} and the bounds \eqref{UnifEq}, one checks that $\{\eta_k\}_{k\in \N}\subset C([0,t],(\cPp, d))$ is equicontinuous.  Now, fix
$\epsilon>0$ and choose $\rho(\epsilon)$ so that for all $k\in \N$
$$
d(\eta^k(\tau_1),\eta^k(\tau_2))<\epsilon
$$
for $|\tau_1-\tau_2|<\rho(\epsilon)$, $\tau_1, \tau_2\in [0,t]$. Also select $\{t_1, t_2, \dots, t_N\}\subset\{s_n\}$ such that any $s\in [0,t]$, there is $j\in \{1,2,\dots, N\}$ such that $|s-t_j|<\rho(\epsilon)$. Note that
for any $s\in [0,t]$,
\begin{align*}
d(\eta^k(s),\eta^\ell(s))\le d(\eta^k(s),\eta^k(t_j))+d(\eta^k(t_j),\eta^\ell(t_j))+d(\eta^\ell(t_j),\eta^\ell(s))<3\epsilon
\end{align*}
by choosing $k$ large enough. As the metric space $(\cPp, d)$ is complete, so is $C([0,t],(\cPp, d))$.  It follows that $\{\eta^k\}_{k\in \N}$ is uniformly convergent to some $\eta\in C([0,t],(\cPp, d))$.

\par By \eqref{UnifEq}, the sequence of functions $s\mapsto ||\dot\eta^k(s)||$ is bounded in $L^p(0,t)$ and so has a subsequence (which we will not relabel) that converges weakly to some $g\in L^p(0,t)$. By narrow convergence,
$$
W_p(\eta(s), \eta(s'))\le \liminf_{k\rightarrow\infty}W_p(\eta^k(s), \eta^k(s'))\le \liminf_{k\rightarrow \infty}\int^s_{s'}||\dot\eta^{k}(\tau)||d\tau=\int^s_{s'}g(\tau)d\tau
$$
for $0\le s\le s'\le t$ (Lemma 7.1.4 of \cite{AGS}).  Hence, $\eta \in AC_p([0,t], \cMp)$ and $||\dot\eta||\le g$. By weak convergence in $L^p(0,t)$,
$$
\int^t_0||\dot\eta(s)||^pds\le \int^t_0 g(s)^pds\le \liminf_{k\rightarrow\infty}\int^t_0||\dot\eta^k(s)||^pds.
$$
\end{proof}

\begin{cor}\label{Existence}
Suppose ${\cal G}$ and ${\cal V}$ are narrowly continuous and that $T$ satisfies \eqref{TChoice}.  Then $\cU(\mu,t)$ has a minimizing path for each $(\mu,t)\in \cMp\times [0,T)$.
\end{cor}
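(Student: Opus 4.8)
The plan is to run the direct method of the calculus of variations, using the compactness proposition above to produce a limiting path and the narrow continuity of $\cG,\cV$ (together with the growth bound \eqref{Vgrowth}) to pass to the limit in the action. The case $t=0$ is trivial, since then the only admissible path is the constant one at $\mu$, with action $\cG(\mu)=\cU(\mu,0)$. So fix $(\mu,t)\in\cMp\times(0,T)$ and let $\{\sigma^k\}_{k\in\N}$ be a minimizing sequence of admissible paths for $\cU(\mu,t)$, i.e.\ $\sigma^k(t)=\mu$ and the action of $\sigma^k$ converges to $\cU(\mu,t)$, which is finite by Corollary \ref{UFinite}.

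First I would extract a uniform energy bound. By the first inequality appearing in \eqref{PoincareLower}, every admissible $\sigma$ satisfies
\[
\cG(\sigma(0))+\int^t_0\Bigl(\tfrac1p\|\dot\sigma(s)\|^p-\cV(\sigma(s))\Bigr)ds\ \ge\ \cG(\mu)-L\,W_p(\sigma(0),\mu)-\bigl(\beta+2^p\alpha W_p(\varrho,\mu)^p\bigr)t+\Bigl(\tfrac1p-(2C_pT)^p\alpha\Bigr)I,
\]
where $I:=\int^t_0\|\dot\sigma(s)\|^pds$ and the coefficient of $I$ is strictly positive by \eqref{TChoice}. Since $W_p(\sigma(0),\mu)=W_p(\sigma(0),\sigma(t))\le t^{1/q}I^{1/p}$ and $I^{1/p}=o(I)$ as $I\to\infty$, the right-hand side tends to $+\infty$ with $I$, uniformly in $\sigma$; as the left-hand side is bounded along the minimizing sequence, we get $\sup_k\int^t_0\|\dot\sigma^k(s)\|^pds<\infty$, hence $\sup_k\int^t_0\|\dot\sigma^k(s)\|ds<\infty$ by H\"{o}lder. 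The compactness proposition then produces a subsequence $\sigma^{k_j}$ and $\eta\in AC_p([0,t],\cMp)$ with $\sigma^{k_j}\to\eta$ in $C([0,t],(\cPp,d))$ and $\int^t_0\|\dot\eta(s)\|^pds\le\liminf_j\int^t_0\|\dot\sigma^{k_j}(s)\|^pds$. In particular $\sigma^{k_j}(s)\to\eta(s)$ narrowly for every $s\in[0,t]$, so $\eta(t)=\mu$ and $\eta$ is admissible for $\cU(\mu,t)$.

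It remains to check lower semicontinuity of the action along $\{\sigma^{k_j}\}$. The endpoint term passes to the limit, $\cG(\sigma^{k_j}(0))\to\cG(\eta(0))$, by narrow continuity of $\cG$ and $\sigma^{k_j}(0)\to\eta(0)$; and $\liminf_j\int^t_0\tfrac1p\|\dot\sigma^{k_j}(s)\|^pds\ge\int^t_0\tfrac1p\|\dot\eta(s)\|^pds$ is the semicontinuity assertion of the compactness proposition. The delicate term is the potential: I would show $\limsup_j\int^t_0\cV(\sigma^{k_j}(s))\,ds\le\int^t_0\cV(\eta(s))\,ds$. For each $s$, $\cV(\sigma^{k_j}(s))\to\cV(\eta(s))$ by narrow continuity of $\cV$; and by \eqref{Vgrowth} together with $W_p(\sigma^{k_j}(s),\varrho)\le W_p(\sigma^{k_j}(s),\mu)+W_p(\mu,\varrho)\le\int^t_0\|\dot\sigma^{k_j}\|\,d\tau+W_p(\mu,\varrho)$ and the uniform energy bound, the numbers $\cV(\sigma^{k_j}(s))$ are bounded above by a single constant independent of $j$ and $s\in[0,t]$; reverse Fatou then yields the claim. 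Combining the three estimates,
\[
\cG(\eta(0))+\int^t_0\Bigl(\tfrac1p\|\dot\eta(s)\|^p-\cV(\eta(s))\Bigr)ds\ \le\ \liminf_{j\to\infty}\Bigl[\cG(\sigma^{k_j}(0))+\int^t_0\Bigl(\tfrac1p\|\dot\sigma^{k_j}(s)\|^p-\cV(\sigma^{k_j}(s))\Bigr)ds\Bigr]=\cU(\mu,t),
\]
while the reverse inequality holds because $\eta$ is admissible; hence $\eta$ is a minimizing path. I expect the main obstacle to be precisely this last passage to the limit in the potential term: since our compactness is only in the narrow topology, in which neither $W_p$ nor the $p$-th moment is continuous, mere $W_p$-continuity of $\cV$ would not suffice, and it is here that both the narrow-continuity hypothesis and the one-sided growth bound \eqref{Vgrowth} are essential.
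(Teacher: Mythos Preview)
Your proof is correct and follows essentially the same approach as the paper: take a minimizing sequence, use the Poincar\'e-based lower bound \eqref{PoincareLower} together with the trivial upper bound $\cU(\mu,t)\le\cG(\mu)-t\cV(\mu)$ to obtain a uniform $L^p$ bound on the metric derivatives, apply the compactness proposition to extract a narrowly-uniformly convergent subsequence, and pass to the limit. If anything, you are more careful than the paper in justifying the passage to the limit in the potential term---the paper simply invokes narrow continuity of $\cV$, whereas you explicitly supply the uniform upper bound from \eqref{Vgrowth} and the energy estimate to run reverse Fatou.
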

\begin{proof}
Let $\sigma_k\in AC_p([0,t], \cMp)$ be a minimizing sequence for $\cU(\mu,t)$. Without loss of generality, suppose there is a sequence of positive numbers $\epsilon_k$ tending to $0$ such that
\begin{equation}\label{EpsK}
\cU(\mu,t)>-\epsilon_k + {\cal G}(\sigma^k(0))+\int^t_0\left(\frac{1}{p}||\dot{\sigma}^k(s)||^p -{\cal V}(\sigma^k(s))\right)ds.
\end{equation}
The simple bound $\cU(\mu,t)\le \cG(\mu)-t\cV(\mu)$ and the above inequality \eqref{EpsK}, manipulated as in \eqref{PoincareLower}, imply there is a universal constant $C$ such that
$$
\int^t_0||\dot{\sigma}^k(s)||^p\le C, \quad k\in \N.
$$
As $\sigma^k(t)=\mu$, the sequence satisfies the hypotheses of the previous lemma and so this sequence converges (up to a subsequence) uniformly in the narrow topology to a $\sigma\in AC_p([0,t],\cMp)$. Moreover,
$$
\int^t_0||\dot\sigma(s)||^pds\le \lim_{k\rightarrow \infty}\int^t_0||\dot\sigma^{k}(s)||^pds.
$$
With our narrow continuity assumptions on $\cG$ and $\cV$, we are now able to pass to the limit as $k\rightarrow \infty$ in \eqref{EpsK} and conclude
$$
\cU(\mu,t)\ge  {\cal G}(\sigma(0))+\int^t_0\left(\frac{1}{p}||\dot{\sigma}(s)||^p -{\cal V}(\sigma(s))\right)ds.
$$
\end{proof}
Let us now assume that $\cG$ and $\cV$ satisfy \eqref{narrowGV} with $g,V\in C^1(\R^d)$. As noted, any minimizing path for $u(x,t)$ satisfies the Euler-Lagrange equations
\eqref{ClassEulLag}. More generally, one derives
$$
\begin{cases}
\frac{d}{ds}\left(|\dot\gamma(s)|^{p-2}\dot\gamma(s)\right)=-\nabla V(\gamma(s)), \quad 0<s<t\\
\gamma(t)=x\\
\dot\gamma(0)=\nabla g(\gamma(0))
\end{cases}.
$$
For minimizing paths $\sigma$ of the generalized value $\cU(\mu,t)$ (with potential $\cV$ and initial condition $\cG$), the analogous necessary condition is that $\sigma$ and any corresponding velocity satisfy the Euler-Poisson system with appropriate boundary conditions. This is described below in the following proposition. We will not provide a proof as a very similar argument is provided in the proof of Theorem 3.9 of \cite{G}.

\begin{prop}\label{EPprop}
Assume \eqref{narrowGV}, $(\mu,t)\in \cMp\times [0,\infty)$ and that $\sigma$ is a minimizing path for $\cU(\mu,t)$. For the minimal velocity $v$ for $\sigma$,
$$
\begin{cases}
\hspace{1.27in} \partial_s\sigma +\nabla\cdot(\sigma v)=0\\
\partial_s(\sigma |v|^{p-2}v)+\nabla\cdot (\sigma |v|^{p-2}v\otimes v)=-\sigma \nabla V
\end{cases},\quad (x,s)\in \R^d\times (0,t)
$$
in the sense of distributions and
$$
|v(x,0)|^{p-2}v(x,0)=\nabla g(x), \quad \sigma(0)\; \text{a.e. $x$}\in \R^d.
$$
\end{prop}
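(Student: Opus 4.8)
The plan is to run the outer-variation argument used in the proof of Theorem 3.9 of \cite{G}. The continuity equation is free: since $v$ is (the minimal) velocity for $\sigma$, the relation $\partial_s\sigma+\nabla\cdot(\sigma v)=0$ holds in the sense of distributions by the very definition \eqref{ContEq}. All the work lies in the momentum equation and the boundary condition at $s=0$, and both come from perturbing $\sigma$ by a flow of diffeomorphisms and exploiting the minimality of $\sigma$ together with the fact that, under \eqref{narrowGV}, $\cG$ and $\cV$ are \emph{linear} in $\mu$.

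Fix $\phi\in C_c^\infty(\R^d\times(0,t);\R^d)$ and, for $|\epsilon|$ small, let $Y^\epsilon(\cdot,s):\R^d\to\R^d$ be the flow solving $\partial_\epsilon Y^\epsilon=\phi(Y^\epsilon,s)$, $Y^0=\id$. Because $\phi$ is compactly supported and vanishes near $s=0$ and $s=t$, the curve $\sigma^\epsilon(s):=Y^\epsilon(\cdot,s)_\#\sigma(s)$ lies in $AC_p([0,t],\cMp)$ (here $Y^\epsilon(y,s)-y$ and $D_yY^\epsilon$ are bounded uniformly in $(\epsilon,y,s)$, which gives $\sigma^\epsilon(s)\in\cMp$ and $v^\epsilon\in L^p$), coincides with $\sigma$ at $s=0$ and $s=t$, and hence is admissible for $\cU(\mu,t)$. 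Representing $\sigma(s)=e(s)_\#\eta$ with $\eta$ concentrated on solutions of $\dot\gamma=v(\gamma,s)$ (Theorem 8.2.1 of \cite{AGS}), the curves $\gamma$ are replaced by $\gamma^\epsilon(s):=Y^\epsilon(\gamma(s),s)$, so $\sigma^\epsilon$ has the velocity $v^\epsilon$ determined by $v^\epsilon(Y^\epsilon(y,s),s)=D_yY^\epsilon(y,s)\,v(y,s)+\partial_sY^\epsilon(y,s)$. Using \eqref{MetricDer}, $\|\dot\sigma^\epsilon(s)\|\le\|v^\epsilon(s)\|_{L^p(\sigma^\epsilon(s))}$, the linearity \eqref{narrowGV}, and a change of variables back to $\sigma(s)$, one obtains
\[
\cU(\mu,t)\;\le\;\int_{\R^d}g\,d\sigma(0)+\int_0^t\!\!\int_{\R^d}\!\left(\tfrac1p\,\bigl|D_yY^\epsilon\,v+\partial_sY^\epsilon\bigr|^p-V(Y^\epsilon)\right)d\sigma(s)\,ds\;=:\;F(\epsilon),
\]
with equality at $\epsilon=0$, since there $v^0=v$ is the minimal velocity and $\sigma$ is a minimizer. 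Thus $\epsilon=0$ minimizes $F$ on a neighbourhood of $0$, so $F'(0)=0$.

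Differentiating under the integral sign (legitimate because $\int_0^t\!\int|v|^p\,d\sigma(s)\,ds<\infty$ and $\phi$, its derivatives, and the flow $Y^\epsilon$ are uniformly bounded on the relevant set, so the difference quotients of $\epsilon\mapsto|D_yY^\epsilon v+\partial_sY^\epsilon|^p$ admit an integrable majorant) and using $D_yY^0=I$, $\partial_sY^0=0$, $\partial_\epsilon Y^\epsilon|_0=\phi$, $\partial_\epsilon D_yY^\epsilon|_0=D_y\phi$, $\partial_\epsilon\partial_sY^\epsilon|_0=\partial_s\phi$, we get
\[
0=F'(0)=\int_0^t\!\!\int_{\R^d}\!\Bigl(|v|^{p-2}v\cdot\bigl(D_y\phi\,v+\partial_s\phi\bigr)-\nabla V\cdot\phi\Bigr)\,d\sigma(s)\,ds,
\]
which is exactly the distributional form of $\partial_s(\sigma|v|^{p-2}v)+\nabla\cdot(\sigma|v|^{p-2}v\otimes v)=-\sigma\nabla V$ on $\R^d\times(0,t)$. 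For the boundary condition, repeat the construction with $\phi\in C_c^\infty(\R^d\times[0,t);\R^d)$ allowed to be nonzero at $s=0$ (still vanishing near $s=t$): now $\sigma^\epsilon(0)=Y^\epsilon(\cdot,0)_\#\sigma(0)\neq\sigma(0)$, so $\cG(\sigma^\epsilon(0))=\int g(Y^\epsilon(\cdot,0))\,d\sigma(0)$ contributes an extra term $\int\nabla g\cdot\phi(\cdot,0)\,d\sigma(0)$ to $F'(0)$, while the interior integral, after integrating the $\partial_s\phi$ term by parts in $s$ and invoking the momentum equation just proved, reduces to the boundary term $-\int|v(\cdot,0)|^{p-2}v(\cdot,0)\cdot\phi(\cdot,0)\,d\sigma(0)$ (the trace of $\sigma|v|^{p-2}v$ at $s=0$ exists because $\partial_s$ of it is a measure up to the boundary). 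Setting $F'(0)=0$ and letting $\phi(\cdot,0)$ range over $C_c^\infty(\R^d;\R^d)$ forces $|v(x,0)|^{p-2}v(x,0)=\nabla g(x)$ for $\sigma(0)$-a.e.\ $x$.

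The main obstacle is the rigorous justification of $F'(0)=0$: checking that $\sigma^\epsilon$ is genuinely admissible and, above all, that differentiation passes under $\int_0^t\!\int_{\R^d}\cdots d\sigma(s)\,ds$ uniformly in $\epsilon$ near $0$; the convexity of $r\mapsto|r|^p$ and the uniform bounds on $Y^\epsilon$ make this routine but slightly delicate. A secondary point is the meaning of $v(x,0)$ itself: under \eqref{narrowGV} with $g,V\in C^1(\R^d)$ the curves in the support of $\eta$ are classical extremals, hence $C^2$, so $\dot\gamma(0)$ is well defined and provides the trace of $v$ at $s=0$. (One could try to shortcut the momentum equation via Theorem \ref{thm2}, but since minimizers of $\cU(\mu,t)$ need not be unique we prefer the direct variational argument; all of this parallels the proof of Theorem 3.9 in \cite{G}.)
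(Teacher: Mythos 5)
Your proposal is correct and takes exactly the route the paper points to: the paper omits the proof of Proposition~\ref{EPprop}, referring the reader to Theorem~3.9 of \cite{G}, and your argument is precisely that outer-variation computation adapted to general $p\in(1,\infty)$ and to the linear forms \eqref{narrowGV}. Both the distributional momentum equation and the natural boundary condition $|v(\cdot,0)|^{p-2}v(\cdot,0)=\nabla g$ come out as you describe, with the two delicate points you flag (differentiability of $F$ at $\epsilon=0$ and the existence of the $s=0$ trace of $\sigma|v|^{p-2}v$) being exactly the points one must check.
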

We will, however, give a proof of Proposition \ref{vequalDu} which establishes a clear link between the classical
HJE \eqref{classicalHJE} and the Euler-Poisson system described in Proposition \ref{EPprop} above.

\begin{proof} (of Propsosition \eqref{vequalDu}) Assume $\sigma\in AC_p([0,t]; \cMp)$ is an optimal path for $\cU(\mu,t)$.  Let $\eta$ be a Borel probability measure on $\R^d\times \Gamma_t$ such that
$\sigma(s)=e(s)_{\#}\eta$ and $\eta$ is concentrated on pairs $(x,\gamma)$ for which $\gamma$ is a solution of the ODE \eqref{gammadotv} satisfying $\gamma(t)=x$.

\par Since $\sigma$ is optimal for $\cU(\mu,t)$, we may repeat the argument given in the proof of Theorem \eqref{thm2} and obtain an equality in \eqref{ImportantVequalDu}. It follows that for $\eta$ almost every
$(x,\gamma)\in \R^d\times\Gamma_t$, $\gamma$ is optimal for $u(x,t)$
$$
u(x,t)=g(\gamma(0)) +\int^t_0\left(\frac{1}{p}|\dot{\gamma}(s)|^p -V(\gamma(s))\right)ds.
$$
By assumption, $u$ is differentiable at $(\gamma(s),s)$ and so $\gamma$ satisfies the optimality equation \eqref{ClassOptEq}. In particular
$$
|v(\gamma(s),s)|^{p-2}v(\gamma(s),s)=\nabla u(\gamma(s),s), \quad \text{a.e. } s\in (0,t).
$$
Thus
$$
|v(e_s(x,\gamma)),s)|^{p-2}v(e_s(x,\gamma),s)=\nabla u(e_s(x,\gamma),s), \quad \text{a.e. } s\in (0,t)
$$
for $\eta$ almost every $(x,\gamma)\in \R^d\times\Gamma_t$, which completes the proof.
\end{proof}

\section{Hamilton-Jacobi equations}\label{SupersolnProof}
Our objective in this section is to prove Theorem \ref{thm1}. Therefore, it is appropriate that we start with a definition of viscosity solution of HJE in the Wasserstein spaces.
Our proof and definition will involve the {\it tangent space}
$$
\text{Tan}_\mu\cMp:=\overline{\{|\nabla \psi|^{q-2}\nabla\psi: \psi\in C^\infty_c(\R^d)\}}^{L^p(\mu)}
$$
and the {\it cotangent space}
$$
\text{CoTan}_\mu\cMp:=\overline{\{ \nabla \psi: \psi\in C^\infty_c(\R^d)\}}^{L^q(\mu)}
$$
of $\cMp$ at a measure $\mu$.

\par These spaces are natural to consider as an absolutely continuous path $\sigma\in AC_p([0,t],\cMp)$ always possesses a velocity field $v$
satisfying $v(s)\in \text{Tan}_{\sigma(s)}\cMp$ for Lebesgue almost every $s\in [0,t]$. And, of course, elements of $\text{Tan}$ and $\text{CoTan}$ can
be paired in a natural way. Nevertheless,  we emphasize that \text{Tan} and \text{CoTan} are definitions and that we are not asserting the existence of any
type of differentiable structure on $\cMp$.  The interested reader can consult section 8.4 of \cite{AGS} for more on tangent and cotangent spaces of the Wasserstein spaces.

\begin{defn}\label{ViscDefn}
$(i)$ $\cU\in USC(\cMp\times (0,T))$ is a {\it viscosity subsolution} of \eqref{WassHJE} if for each $(\mu_0,t_0)\in \cMp\times(0,T)$, $a\in\R$ and $\xi\in \text{CoTan}_\mu\cMp$ such
that
\begin{equation}\label{SubSolnIneq}
\cU(\mu,t)\le \cU(\mu_0,t_0) + \inf_{\pi\in\Gamma_0(\mu_0,\mu)}\iint_{\R^d\times\R^d}\xi(x)\cdot(y-x)d\pi(x,y) + a(t-t_0)+o(|t-t_0|) + o(W_p(\mu,\mu_0)),
\end{equation}
the following inequality holds
\begin{equation}\label{SubSolnIneq2}
a+\frac{1}{q}||\xi||^q_{L^q(\mu)}+{\cal V}(\mu_0)\le 0.
\end{equation}
Here $\Gamma_0(\mu_0,\mu)\subset \Gamma(\mu_0,\mu)$ is the collection of optimal measures for $W_p(\mu_0,\mu)$. \\\\
$(ii)$ $\cU\in LSC(\cMp\times (0,T))$ is a {\it viscosity supersolution} of \eqref{WassHJE} if for each $(\mu_0,t_0)\in \cMp\times(0,T)$, $a\in\R$ and $\xi\in \text{CoTan}_\mu\cMp$ such
that
\begin{equation}\label{SupSolnIneq}
\cU(\mu,t)\ge \cU(\mu_0,t_0) + \sup_{\pi\in\Gamma_0(\mu_0,\mu)}\iint_{\R^d\times\R^d}\xi(x)\cdot(y-x)d\pi(x,y) + a(t-t_0)+o(|t-t_0|) + o(W_p(\mu,\mu_0)),
\end{equation}
the following inequality holds
\begin{equation}\label{SupSolnIneq2}
a+\frac{1}{q}||\xi||^q_{L^q(\mu)}+{\cal V}(\mu_0)\ge 0.
\end{equation}
\\
$(iii)$  $\cU\in C(\cMp\times (0,T))$ is a {\it viscosity solution} of \eqref{WassHJE} if it is both a sub- and supersolution.
\end{defn}


\begin{proof} (of Theorem \ref{thm1}) 1. We first verify that $\cU$ is a subsolution of equation \eqref{WassHJE}. Assume \eqref{SubSolnIneq} holds for some $(\mu_0,t_0)\in \cMp\times(0,T)$, $a\in\R$ and $\xi\in \text{CoTan}_\mu\cMp$.  Set
$$
v:=\lambda\left(r-\text{id}_{\R^d}\right),
$$
for $\lambda>0$ and a Borel mapping $r:\R^d\rightarrow \R^d$ such that $(\id\times r)_{\#}\mu_0\in\Gamma_0(\mu_0,r_{\#}\mu_0)$. Theorem 8.5.1 of \cite{AGS} asserts that
the collection of such $v$ is $L^p(\mu_0)$ dense in $\text{Tan}_{\mu_0}\cMp$.

\par Next, define
$$
\sigma(s):=(\id + (t_0-s)v)_{\#}\mu_0, \quad s\in \R,
$$
and observe
$$
\id + (t_0-s)v=(1-\lambda(t_0 -s))\id + \lambda(t_0-s)r
$$
is a convex combination provided $t_0-1/\lambda<s< t_0$. It follows that the restriction of $\sigma$
to the interval $[t_0-1/\lambda,t_0]$ is a constant speed geodesic joining $\sigma(t_0-1/\lambda)=r_{\#}\mu_0$ to $\sigma(t_0)=\mu_0$ (Theorem 7.2.2 \cite{AGS}).
In particular,
$$
\pi(s):=\left(\id\times(\id + (t_0-s)v)\right)_{\#}\mu_0\in \Gamma_0(\mu_0, \sigma(s))
$$
for $s\in [t_0-1/\lambda,t_0]$.

\par Fix $h\in (0,\max\{t_0,1/\lambda\})$ and note that dynamic programming \eqref{DPP} combined with \eqref{SubSolnIneq} implies
\begin{align*}
\cU(\mu_0, t_0) & \le \cU(\sigma(t_0-h),t_0)+\int^{t_0}_{t_0-h}\left(\frac{1}{p}||\dot{\sigma}(s)||^p -{\cal V}(\sigma(s))\right)ds\\
& = \cU((\id + hv)_{\#}\mu_0, t_0-h) + h\frac{||v||^p_{L^p(\mu_0)}}{p} -\int^{t_0}_{t_0-h}{\cal V}(\sigma(s))ds\\
 &\le U(\mu_0, t_0) +\iint_{\R^d\times\R^d}\xi(x)\cdot(y-x)d\pi_{t_0-h}(x,y) - a h + h\frac{||v||^p_{L^p(\mu_0)}}{p} -\int^{t_0}_{t_0-h}{\cal V}(\sigma(s))ds +o(h) \\
& = U(\mu_0, t_0) +h\int_{\R^d}\xi(x)\cdot v(x)d\mu_0(x) - a h + h\frac{||v||^p_{L^p(\mu_0)}}{p} -\int^{t_0}_{t_0-h}{\cal V}(\sigma(s))ds +o(h)
\end{align*}
as $h\rightarrow 0^+$. Consequently, if we cancel $\cU(\mu_0, t_0)$, divide by $h$, and then send $h$ to 0,
\begin{equation}\label{MustTakeSup}
a- \int_{\R^d}\xi(x)\cdot v(x)d\mu_0(x)-\frac{1}{p}\int_{\R^d} |v(x)|^pd\mu_0(x)+{\cal V}(\mu_0)\le 0.
\end{equation}
As previously remarked, Theorem 8.5.1 of \cite{AGS} implies the above inequality holds for a $L^p(\mu_0)$ dense set of $v\in \text{Tan}_{\mu_0}\cMp$;
consequently, it must also hold for all $v\in \text{Tan}_{\mu_0}\cMp$.  Taking the supremum all $v\in\text{Tan}_{\mu_0}\cMp$ in \eqref{MustTakeSup} verifies \eqref{SubSolnIneq2} as
desired.
\par 2. We now show that $\cU$ is a supersolution of equation \eqref{WassHJE}. Assume \eqref{SupSolnIneq} holds for some $(\mu_0,t_0)\in \cMp\times(0,T)$, $a\in\R$ and $\xi\in \text{CoTan}_\mu\cMp$, and also fix $\delta\in (0,1)$. For each $h\in (0,t_0)$, there is $\sigma=\sigma^h$ admissible for $\cU(\mu_0,t_0)$ such that
\begin{equation}\label{LowerSup}
\cU(\mu_0, t_0) > -\delta h + {\cal G}(\sigma(0)) +\int^{t_0}_{0}\left(\frac{1}{p}||\dot{\sigma}(s)||^p -{\cal V}(\sigma(s))\right)ds.
\end{equation}
Moreover,
\begin{equation}\label{hdeltaineq}
\cU(\mu_0, t_0) > -\delta h + \cU(\sigma(t_0-h),t_0-h) +\int^{t_0}_{t_0-h}\left(\frac{1}{p}||\dot{\sigma}(s)||^p -{\cal V}(\sigma(s))\right)ds
\end{equation}
also holds for $h\in (0,t_0)$.

\par Recall the inequality $\cU(\mu_0,t_0)\le {\cal G}(\mu_0) -t_0 {\cal V}(\mu_0)$, which follows from choosing the constant path equal to $\mu_0$ in the definition of $\cU$. Combining this upper bound with \eqref{LowerSup} and \eqref{PoincareLower}, implies
$$
\int^{t_0}_0||\dot\sigma(s)||^pds\le C
$$
independently $h\in(0,t_0)$. In particular, we have the estimate
\begin{equation}\label{BadEstW}
W_p(\sigma(t_0-h), \mu_0)\le \int^{t_0}_{t_0 -h}||\dot{\sigma}(s)||ds\le Ch^{1-1/p}
\end{equation}
for some universal constant $C$.

\par By inequality \eqref{hdeltaineq} and our assumption \eqref{SupSolnIneq},
\begin{align}\label{supsolnineq1}
\cU(\mu_0, t_0) & > -\delta h + \cU(\mu_0, t_0) -a h +  \iint_{\R^d\times\R^d}\xi(x)\cdot(y-x)d\pi_h(x,y) \nonumber \\
&+\int^{t_0}_{t_0-h}\left(\frac{1}{p}||\dot{\sigma}(s)||^p -{\cal V}(\sigma(s))\right)ds +o(h)+o(W_p(\sigma(t_0-h), \mu_0))
\end{align}
for any $\pi_h\in \Gamma_0(\mu_0,\sigma(t_0-h))$. In particular, we may cancel the $\cU(\mu_0, t_0)$ terms, divide by $h$ and employ 
\eqref{BadEstW} to arrive at 
\begin{align}\label{supsolnineq}
0 & > -\delta -a  + \frac{1}{h} \iint_{\R^d\times\R^d}\xi(x)\cdot(y-x)d\pi_h(x,y) \nonumber \\
&+\frac{1}{h}\int^{t_0}_{t_0-h}\left(\frac{1}{p}||\dot{\sigma}(s)||^p -{\cal V}(\sigma(s))\right)ds +o(1)+o(1)\frac{W_p(\sigma(t_0-h), \mu_0)}{h}
\end{align}
as $h\rightarrow 0^+$. 

\par  Applying the Poincar\'{e} inequality \eqref{Poincare}, we find for any $\epsilon>0$
\begin{align*}
\int^{t_0}_{t_0-h}{\cal V}(\sigma(s))ds &\le \int^{t_0}_{t_0-h}\left\{\alpha W_p(\sigma(s),\varrho)^p+\beta\right\}ds \\
&\le \int^{t_0}_{t_0-h}\alpha 2^pW_p(\sigma(s),\mu_0)^pds +\left(\beta+\alpha 2^pW_p(\mu_0,\varrho)^p\right)h \\
&\le \alpha 2^p(C_p h)^p\int^{t_0}_{t_0-h}|| \dot\sigma(s)||^pds +\left(\beta+\alpha 2^pW_p(\mu_0,\varrho)^p\right)h \\
&\le \epsilon \int^{t_0}_{t_0-h}|| \dot\sigma(s)||^pds +  Ch
\end{align*}
for $h$ small enough.

\par Young's inequality also gives
$$
\iint_{\R^d\times\R^d}\xi(x)\cdot(y-x)d\pi_h(x,y)\ge -\frac{2h}{q}||\xi||^q_{L^q(\mu_0)} -\frac{W_p(\sigma(t_0-h),\mu_0)^p}{p2^{p-1}h^{p-1}}
$$
Combining these observations with \eqref{supsolnineq} yields
\begin{align*}
0&> -\delta-a -\frac{2}{q}||\xi||^q_{L^q(\mu_0)}  +o(1)\left(1+ \frac{W_p(\sigma(t_0-h),\mu_0)}{h}\right)\\
&  +\left(\frac{1}{p}-\epsilon-\frac{1}{2^{p-1}p}\right)\left(\frac{W_p(\sigma(t_0-h),\mu_0)}{h}\right)^p-  C
\end{align*}
It is now immediate that for $\epsilon>0$ chosen small enough (which can be achieved by choosing $h$ small enough),
\begin{equation}\label{UnifSmallh}
W_p(\sigma(t_0-h),\mu_0)\le C h
\end{equation}
for all sufficiently small $h$.  And in particular, $o(W_p(\sigma(t_0-h,\mu_0))=o(h)$; note the improvement over \eqref{BadEstW} and the control we now have over the term 
$o(1)\frac{W_p(\sigma(t_0-h), \mu_0)}{h}$ in \eqref{supsolnineq}.

\par 3. With our new estimate, inequality \eqref{supsolnineq1} becomes
\begin{align*}
\cU(\mu_0, t_0) &> -\delta h + \cU(\mu_0, t_0) -a h +  \iint\xi(x)\cdot(y-x)d\gamma_h(x,y) \nonumber \\
&+\int^{t_0}_{t_0-h}\left(\frac{1}{p}||\dot{\sigma}(s)||^p -{\cal V}(\sigma(s))\right)ds +o(h)\\
&> -\delta h + \cU(\mu_0, t_0) -a h - \frac{h}{q}||\xi||^q_{L^q(\mu_0)}-\frac{W_p(\sigma(t_0-h),\mu_0)^p}{ph^{p-1}} \nonumber \\
&+\int^{t_0}_{t_0-h}\left(\frac{1}{p}||\dot{\sigma}(s)||^p -{\cal V}(\sigma(s))\right)ds +o(h)\\
&\ge -\delta h + \cU(\mu_0, t_0) -a h - \frac{h}{q}||\xi||^q_{L^q(\mu_0)}-\int^{t_0}_{t_0-h}{\cal V}(\sigma(s))ds +o(h).
\end{align*}Hence,
$$
a+\frac{1}{q}||\xi||^q_{L^q(\mu_0)}+\frac{1}{h}\int^{t_0}_{t_0-h}{\cal V}(\sigma(s))ds> -\delta +o(1)
$$
as $\delta \rightarrow 0^+$. Notice that $\lim_{s\rightarrow t_0^-}\sigma(s)=\mu_0$ {\it uniformly} in all $h>0$ small enough by \eqref{UnifSmallh}.
Thus,
$$
a+\frac{1}{q}||\xi||^q_{L^q(\mu_0)}+{\cal V}(\mu_0)\ge -\delta.
$$
As $\delta$ was arbitrary, inequality \eqref{SupSolnIneq2} is now established.
\par 4.  Assertion $(ii)$ follows from simple estimates. Indeed, recall $\cU(\mu,t)\le {\cal G}(\mu)-t{\cal V}(\mu)$ and that in the proof of Lemma \eqref{UFinite}, we established
an inequality that implies $\cU(\mu,t)\ge {\cal G}(\mu) - Ct$ for an appropriate constant $C=C(\mu)$. Hence,  $\lim_{t\rightarrow 0^+}\cU(\mu,t)=\cG(\mu)$.
\end{proof}
\begin{rem} Following Definition \ref{ViscDefn}, we may define the {\it superdifferential} of ${\cal H}\in USC({\cal M}_2)$ at $\mu_0$ as mappings $\xi\in\text{CoTan}_{\mu_0}({\cal M}_2)$
for which
\begin{equation}\label{subdiffuno}
{\cal H}(\mu)\le {\cal H}(\mu_0)+\inf_{\pi\in \Gamma_0(\mu_0,\mu)}\iint_{\R^d\times\R^d}\xi(x)\cdot(y-x)d\pi(x,y) + o(W_2(\mu_0,\mu)),
\end{equation}
as $W_2(\mu_0,\mu)\rightarrow 0$.  We may extend
this definition to measures as follows.  A measure $\gamma\in {\cal P}_2(\R^d\times\R^d)$ belongs to the {\it extended subdifferential} of ${\cal H}$  at $\mu_0$ provided $\pi^1_{\#}\gamma=\mu_0$ and
\begin{equation}\label{subdiffdos}
{\cal H}(\mu)\le {\cal H}(\mu_0)+\inf_{\rho}\iiint_{\R^d\times\R^d\times \R^d}z\cdot(y-x)d\rho(x,y,z) + o(W_2(\mu,\mu_0)),
\end{equation}
as $W_2(\mu,\mu_0)\rightarrow 0$. Above, the supremum is taken
over $\rho\in {\cal P}_2(\R^d\times\R^d\times \R^d)$ satisfying
$\pi^{1,3}_\#\rho=\gamma$ and
$\pi^{1,2}_\#\rho\in\Gamma_0(\mu_0,\mu)$.

\par Observe that for any $\xi$ satisfying \eqref{subdiffuno}, $\gamma:=(\id\times\xi)_\#\mu_0$ satisfies \eqref{subdiffdos}.  In \cite{AF}, Ambrosio and Feng used the extended subdifferential to successfully compare the corresponding viscosity sub and supersolutions. 
It is an open problem to deduce whether viscosity sub and supersolutions as defined in this paper admit a comparison principle. 
\end{rem}

\par We conclude by stating that the modified generalized value function \eqref{ModValue} also has a PDE characterization.  We omit the proof as it is very similar to the one presented above. Again, we view this a very good sign that there are many interesting problems to be worked out in the direction of deterministic control in the Wasserstein
spaces.

\begin{prop}\label{LaxProp} Assume $\ell$ is increasing, convex and satisfies
$$
\lim_{w\rightarrow +\infty}\frac{\ell(w)}{w}=+\infty.
$$
Then the modified action $\cU$ defined in \eqref{ModValue} is a viscosity solution of the HJE \eqref{LaxPDE}.
\end{prop}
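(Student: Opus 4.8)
The plan is to repeat the proof of Theorem~\ref{thm1} with $\cV\equiv 0$ and the running cost $\tfrac1p\|\dot\sigma\|^p$ replaced by $\ell(\|\dot\sigma\|)$, letting the convexity and superlinear growth of $\ell$ take over the roles played before by Jensen's, Young's, and the Poincar\'e inequalities. The relevant notion of solution is the evident analogue of Definition~\ref{ViscDefn}: $\cU$ is a viscosity subsolution of \eqref{LaxPDE} if \eqref{SubSolnIneq} holding at $(\mu_0,t_0)$ with data $(a,\xi)$ forces $a+\ell^*(\|\xi\|_{L^q(\mu_0)})\le 0$, and a supersolution if \eqref{SupSolnIneq} forces $a+\ell^*(\|\xi\|_{L^q(\mu_0)})\ge0$; here $\ell^*(w)=\sup_{\rho\ge0}(w\rho-\ell(\rho))$, finite for all $w\ge0$ by the superlinearity hypothesis. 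One first checks that $\cU$ is finite, jointly continuous on $\cMp\times(0,\infty)$, and satisfies $\lim_{t\to0^+}\cU(\mu,t)=\cG(\mu)$; these follow from the Hopf--Lax formula \eqref{GenHopfLax} together with the elementary estimate $\cG(\tau)+t\ell(W_p(\mu,\tau)/t)\ge\cG(\mu)+W_p(\mu,\tau)-t\,\ell^*(L+1)$, which comes from combining $\text{Lip}(\cG)\le L$ with $\ell(\rho)\ge(L+1)\rho-\ell^*(L+1)$. The dynamic programming identity \eqref{DPP} persists verbatim with the integrand replaced by $\ell(\|\dot\sigma\|)$.

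\emph{Subsolution.} Suppose \eqref{SubSolnIneq} holds at $(\mu_0,t_0)$ with data $(a,\xi)$. As in Step~1 of the proof of Theorem~\ref{thm1}, for a Borel map $r$ with $(\id\times r)_\#\mu_0\in\Gamma_0(\mu_0,r_\#\mu_0)$ and $\lambda>0$ set $v:=\lambda(r-\id)$ and $\sigma(s):=(\id+(t_0-s)v)_\#\mu_0$; for $h\in(0,1/\lambda)$ the restriction of $\sigma$ to $[t_0-h,t_0]$ is a constant-speed geodesic, so $\|\dot\sigma(s)\|\equiv\|v\|_{L^p(\mu_0)}$ there, while $(\id\times(\id+hv))_\#\mu_0\in\Gamma_0(\mu_0,\sigma(t_0-h))$ and $W_p(\sigma(t_0-h),\mu_0)=h\|v\|_{L^p(\mu_0)}$. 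Inserting this path into \eqref{DPP} over $[t_0-h,t_0]$, bounding $\cU(\sigma(t_0-h),t_0-h)$ by \eqref{SubSolnIneq}, cancelling $\cU(\mu_0,t_0)$, dividing by $h$ and letting $h\to0^+$ gives
\[
a-\int_{\R^d}\xi(x)\cdot v(x)\,d\mu_0(x)-\ell\big(\|v\|_{L^p(\mu_0)}\big)\le 0 .
\]
Since these $v$ are $L^p(\mu_0)$-dense in $\text{Tan}_{\mu_0}\cMp$ (Theorem~8.5.1 of \cite{AGS}) and the left side is continuous in $v$, the inequality holds for every $v\in\text{Tan}_{\mu_0}\cMp$. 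Testing it with $v=-\rho\,|\xi|^{q-2}\xi\big/\big\||\xi|^{q-2}\xi\big\|_{L^p(\mu_0)}$, $\rho\ge0$ — admissible because $\xi\in\text{CoTan}_{\mu_0}\cMp$, the duality map is continuous $L^q(\mu_0)\to L^p(\mu_0)$, and $\text{Tan}_{\mu_0}\cMp$ is closed under scalar multiplication — yields $a+\rho\,\|\xi\|_{L^q(\mu_0)}-\ell(\rho)\le0$, so taking the supremum over $\rho\ge0$ gives $a+\ell^*(\|\xi\|_{L^q(\mu_0)})\le0$.

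\emph{Supersolution.} Suppose \eqref{SupSolnIneq} holds at $(\mu_0,t_0)$ with data $(a,\xi)$; fix $\delta\in(0,1)$ and for small $h>0$ pick $\sigma=\sigma^h$ admissible for $\cU(\mu_0,t_0)$ as in \eqref{LowerSup} with integrand $\ell(\|\dot\sigma\|)$. Comparing with $\cU(\mu_0,t_0)\le\cG(\mu_0)+t_0\ell(0)$ and using $\ell(\rho)\ge(L+1)\rho-\ell^*(L+1)$ gives $\int_0^{t_0}\ell(\|\dot\sigma^h(s)\|)\,ds\le C$ uniformly in $h$; splitting $\{\|\dot\sigma^h\|\le R_N\}$ off, where $\ell(\rho)\ge N\rho$ for $\rho\ge R_N$, then gives $w_h:=W_p(\sigma^h(t_0-h),\mu_0)\le R_N h+C/N$ for every $N$, hence $w_h\to0$ as $h\to0^+$. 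Now combine \eqref{DPP} over $[t_0-h,t_0]$ with \eqref{SupSolnIneq} at $(\sigma^h(t_0-h),t_0-h)$, use the H\"older bound $\iint_{\R^d\times\R^d}\xi(x)\cdot(y-x)\,d\pi\ge-\|\xi\|_{L^q(\mu_0)}w_h$ for a $W_p$-optimal $\pi$ and Jensen's inequality $\tfrac1h\int_{t_0-h}^{t_0}\ell(\|\dot\sigma^h\|)\,ds\ge\ell(w_h/h)$, cancel $\cU(\mu_0,t_0)$, and divide by $h$ to reach
\[
0> -\delta-a+\Big(\ell(w_h/h)-\|\xi\|_{L^q(\mu_0)}\,\tfrac{w_h}{h}\Big)+o(1)\Big(1+\tfrac{w_h}{h}\Big)\qquad(h\to0^+).
\]
Since $\ell(\rho)-\|\xi\|_{L^q(\mu_0)}\rho\ge\rho-\ell^*(\|\xi\|_{L^q(\mu_0)}+1)$, the displayed inequality forces $w_h/h$ to remain bounded, so $w_h=O(h)$ and the error term reduces to $o(1)$; feeding this back and using $\ell(\rho)-\|\xi\|_{L^q(\mu_0)}\rho\ge-\ell^*(\|\xi\|_{L^q(\mu_0)})$ we obtain $0\ge-\delta-a-\ell^*(\|\xi\|_{L^q(\mu_0)})+o(1)$, and letting $h\to0^+$ then $\delta\to0^+$ gives $a+\ell^*(\|\xi\|_{L^q(\mu_0)})\ge0$.

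I expect the main obstacle to be precisely this a priori bound $W_p(\sigma^h(t_0-h),\mu_0)=O(h)$ in the supersolution step: since there is no $\tfrac1p\|\dot\sigma\|^p$ term on which to run the Poincar\'e-inequality argument of Theorem~\ref{thm1}, one must extract this control purely from the convexity and superlinear growth of $\ell$ (equivalently, finiteness of $\ell^*$), first to obtain $w_h\to0$ and then to close the linear-in-$w_h/h$ estimate. The remaining ingredients — the geodesic variations, the density of the test velocities in $\text{Tan}_{\mu_0}\cMp$, and the identification of the Hamiltonian with $\ell^*(\|\xi\|_{L^q(\mu_0)})$ — run exactly as before.
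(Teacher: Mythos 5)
Your proposal is correct and follows exactly the route the paper intends: the paper omits the argument because it is ``very similar'' to the proof of Theorem~\ref{thm1}, and your proof adapts that proof in the required way, replacing the $p$-power coercivity and Young's inequality by the convexity and superlinearity of $\ell$ via Jensen's inequality and the Fenchel--Young inequality $\ell(\rho)\ge w\rho-\ell^*(w)$, both in identifying the Hamiltonian $\ell^*\bigl(\|\xi\|_{L^q(\mu_0)}\bigr)$ and in obtaining the a priori bound $W_p(\sigma^h(t_0-h),\mu_0)=O(h)$ for the supersolution step. You also correctly flag that this $O(h)$ estimate is the one genuinely new point and handle it cleanly through the two-stage $w_h\to0$ then $w_h/h=O(1)$ argument.
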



\end{document}